\documentclass[preprint,12pt]{elsarticle}
\usepackage[T1]{fontenc}
\usepackage{lmodern}
\usepackage{amsmath,amssymb,amsthm,mathtools,mathrsfs}
\usepackage{booktabs,tabularx,array,graphicx,microtype}
\usepackage{lineno}
\usepackage[hidelinks]{hyperref}
\biboptions{numbers,sort&compress}
\numberwithin{equation}{section}
\allowdisplaybreaks[2]
\newtheorem{theorem}{Theorem}[section]
\newtheorem{proposition}[theorem]{Proposition}
\newtheorem{lemma}[theorem]{Lemma}
\newtheorem{corollary}[theorem]{Corollary}
\theoremstyle{definition}
\newtheorem{assumption}[theorem]{Assumption}
\theoremstyle{remark}

\newcommand{\R}{\mathbb R}
\newcommand{\E}{\mathbb E}
\newcommand{\K}{\mathcal K}
\newcommand{\Ccal}{\mathcal C}
\newcommand{\Fcal}{\mathcal F}
\newcommand{\Zstar}{\mathcal Z^\star}
\newcommand{\Dcal}{\mathscr D}
\DeclareMathOperator*{\argmin}{arg\,min}
\DeclareMathOperator{\diag}{diag}
\begin{document}
\begin{frontmatter}
\title{Equilibrium bias and convergence in augmented primal--dual dynamics with sampled constraints}
\author[xjtu]{Kang Liu\corref{cor1}}
\ead{kanyo@foxmail.com}
\author[xjtu-auto]{Mengxiao Chen}
\author[polyu]{Siqi Xiong}
\author[xjtu-auto]{Yi Xia}
\author[hust]{Zhan Sun}
\cortext[cor1]{Corresponding author.}

\affiliation[xjtu]{
    organization={School of Future Technology, Xi'an Jiaotong University},
    city={Xi'an},
    country={China}
}

\affiliation[xjtu-auto]{
    organization={School of Automation Science and Engineering, Xi'an Jiaotong University},
    city={Xi'an},
    country={China}
}

\affiliation[polyu]{
    organization={Department of Rehabilitation Sciences, The Hong Kong Polytechnic University},
    city={Hong Kong SAR},
    country={China}
}

\affiliation[hust]{
    organization={School of Integrated Circuits, Huazhong University of Science and Technology},
    city={Wu'Han},
    country={China}
}
\begin{abstract}
This work studies the stability and convergence of augmented primal–dual dynamics when constraint values are estimated from samples. Unbiased constraint observations can produce a biased augmented
multiplier signal, shifting the equilibria of the mean dynamics. For componentwise inequalities, we give a necessary and sufficient condition for preserving the Karush–Kuhn–Tucker (KKT) equilibria and construct a convex example with a locally exponentially stable equilibrium that violates complementarity. To address this bias, constraint values are estimated recursively before forming the augmented multiplier signal. For smooth convex conic problems, a joint energy analysis establishes boundedness of the primal, dual, and estimation states, vanishing estimation error, and almost sure convergence of the primal–dual iterates to a single KKT point under global regularity and bounded conditional second moments. The result allows nonunique solutions and multipliers while keeping the number of samples per iteration fixed. Numerical studies illustrate the predicted equilibrium bias and examine convergence with nonunique KKT points and nonlinear constraints.
\end{abstract}
\begin{keyword}
Constrained optimization \sep Primal--dual dynamics \sep Stochastic approximation \sep Recursive estimation \sep Almost sure convergence
\end{keyword}
\end{frontmatter}

\section{Introduction}
\label{sec:intro}

Primal--dual methods solve constrained optimization problems by updating decision variables together with multipliers associated with the constraints \citep{boyd2004}. Their continuous dynamics also provide a basis for studying stability and convergence, as developed by Feijer and Paganini \citep{feijer2010} for primal--dual gradient systems. Augmented Lagrangian methods modify the interaction between the primal and dual variables \citep{rockafellar1976}. For nonlinear convex inequalities, Tang et al.\ \citep{tang2020} established stability results for augmented primal--dual gradient dynamics under strong convexity. These dynamics use constraint values to determine the multiplier feedback. When a constraint is an expectation, however, its value may only be available through noisy observations. It is then necessary to understand how sampling affects the equilibria and convergence of the resulting system.

For inequality constraints, the augmented multiplier signal applies a positive part operation to the current multiplier and a scaled constraint value. An unbiased observation of the constraint can produce a biased signal after this nonlinear operation. With a fixed augmentation scale, decreasing the optimization step does not remove this bias from the mean update direction. Consequently, a KKT point of the original problem may cease to be an equilibrium of the mean dynamics. The sampled dynamics may instead admit a stable equilibrium at which a multiplier is positive while its constraint is strictly satisfied. Stability and feasibility alone therefore do not ensure that the limiting state satisfies complementarity.

A natural way to address this problem is to estimate the constraint value recursively before forming the augmented signal. Wang et al.\ \citep{wang2017} developed auxiliary recursions for tracking inner expectations in stochastic compositional optimization. In the present setting, the quantity being estimated changes with the primal iterate, while the primal update depends on both the estimate and the multiplier. The three states therefore evolve as a coupled system. A convergence analysis must first establish their boundedness, then show that the estimation error vanishes and that the limiting primal--dual state satisfies the original KKT conditions.

For smooth convex conic problems, this work makes two contributions:
\begin{enumerate}
\item We characterize when direct constraint sampling preserves a KKT equilibrium and construct a convex example with a locally exponentially stable non-KKT equilibrium of the mean dynamics.
\item Under global regularity and bounded conditional second moments, we prove joint boundedness, vanishing tracking error, and almost sure point convergence of the recursive iteration, with fixed sample counts and a possibly nonunique KKT set.
\end{enumerate}

The analysis begins with an exact energy identity for the ideal augmented dynamics. Combining this energy with the estimation error establishes stability of the stochastic iteration. An invariant set argument then recovers stationarity from limiting trajectories, and convergence of the distance energies yields convergence to a single KKT point. Numerical studies examine the predicted equilibrium bias, convergence with a nonunique KKT set, and the effect of constraint estimation in nonlinear problems. Comparisons with stochastic primal--dual methods report both the residuals of current iterates and the objective and feasibility errors of their prescribed outputs.

\section{Related work}
\label{sec:related}

Augmented Lagrangian methods provide a classical approach to
constrained convex optimization. Rockafellar \citep{rockafellar1976}
connected these methods with the proximal point algorithm.
For continuous dynamics, Feijer and Paganini \citep{feijer2010}
studied the stability of primal--dual gradient systems, while
Qu and Li \citep{qu2019} established exponential stability under
strong convexity and suitable constraint conditions.
Tang et al.\ \citep{tang2020} extended the analysis to augmented
dynamics with nonlinear convex inequalities, obtaining
semiglobal exponential stability under additional regularity
conditions. These results concern exact constraint feedback;
sampling a constraint before applying the nonlinear augmented
signal changes the vector field being analyzed.

Stochastic primal--dual methods address several forms of
incomplete problem information. For distributed optimization,
Niu et al.\ \citep{niu2019} obtained almost sure convergence
using stochastic averaging gradients under strong convexity.
Du et al.\ \citep{du2025} combined distributed tracking with
geometrically increasing gradient batches to obtain mean square
linear convergence for coupled affine inequalities over
time-varying networks. When the constraints themselves are
expectations, their values must also be estimated.
Zhang et al.\ \citep{zhang2022} incorporated sampled constraints
into a stochastic linearized proximal method of multipliers,
with bounds on objective error and constraint violation.
Yan and Xu \citep{yan2022} developed adaptive primal--dual
stochastic gradient updates with guarantees for averaged
outputs. These methods motivate distinguishing how a constraint
observation enters the update from how the resulting iterates
are evaluated.

Recursive estimation addresses the composition of expectations
with nonlinear maps. Wang et al.\ \citep{wang2017} introduced
stochastic compositional gradient descent, using an auxiliary
recursion to track an inner expectation on a faster time scale.
Subsequent developments improved convergence rates through
acceleration \citep{wangacc2017} and stochastic correction
\citep{chen2021}. Extensions treat compositional constraints
\citep{thomdapu2023,yang2026} and compositional minimax problems
\citep{deng2025}. In the present setting, the tracked constraint
enters an augmented multiplier signal, and the resulting
stochastic direction also depends on an evolving multiplier.

This work focuses on the equilibrium distortion caused by
sampling constraints at a fixed augmentation scale.
It characterizes when a KKT point is preserved and analyzes
recursive estimation as a remedy. The main analytical task is
to establish boundedness of the coupled states and convergence
of the current primal--dual iterates to one KKT point, with
fixed sample counts, no imposed bounded primal or dual domain,
and a possibly nonunique KKT set.

\section{Ideal augmented primal--dual dynamics}
\label{sec:setup}

\subsection{Convex constraints and augmented feedback}

Let $n$ and $m$ be positive integers. Consider the problem
\begin{equation*}
\min_{x\in\R^n} f(x)
~~\text{subject to}~~
c(x)\in-\K,
\end{equation*}
where $x$ is the decision variable,
$f:\R^n\to\R$ is the objective,
$c:\R^n\to\R^m$ is the constraint map, and
$\K\subseteq\R^m$ is a nonempty closed convex cone.
Here $-\K\coloneqq\{-v:v\in\K\}$.
The dual cone of $\K$ is
\[
\Ccal\coloneqq\K^*
\coloneqq
\{u\in\R^m:\langle u,v\rangle\ge0
\text{ for every }v\in\K\},
\]
where $\langle a,b\rangle\coloneqq a^\top b$ denotes the
Euclidean inner product and ${}^\top$ denotes transpose.
The variable $u\in\Ccal$ is the Lagrange multiplier.

Write $c=(c_1,\ldots,c_m)^\top$ and define its Jacobian by
\[
J_c(x)\coloneqq
\left(\frac{\partial c_i(x)}{\partial x_j}\right)_
{\substack{1\le i\le m\\1\le j\le n}}
\in\R^{m\times n}.
\]
The ordinary Lagrangian is
\[
L(x,u)\coloneqq f(x)+\langle u,c(x)\rangle.
\]
The map $c$ is $\K$-convex if
$\langle u,c(\cdot)\rangle$ is convex for every $u\in\Ccal$.
The Karush--Kuhn--Tucker (KKT) set is
\[
\Zstar\coloneqq
\left\{
(x,u)\in\R^n\times\R^m:
\begin{array}{l}
\nabla f(x)+J_c(x)^\top u=0,\\
c(x)\in-\K,~ u\in\Ccal,~
\langle u,c(x)\rangle=0
\end{array}
\right\},
\]
where $\nabla f$ denotes the gradient of $f$.

\begin{assumption}[Convexity and local regularity]
\label{ass:convex}
The function $f$ is convex and the map $c$ is $\K$-convex.
Both are continuously differentiable.
The gradient $\nabla f$ and the Jacobian $J_c$ are locally
Lipschitz, and $\Zstar$ is nonempty.
\end{assumption}

Ordinary componentwise inequalities correspond to
$\K=\R_+^m$, where $\R_+^m$ is the nonnegative orthant.
For $m_e$ equality constraints and $m_i$ inequality constraints,
with $m_e+m_i=m$, take
$\K=\{0\}^{m_e}\times\R_+^{m_i}$.
In this case, $\K$-convexity requires the equality components
of $c$ to be affine.
Expectation constraints have the form
$c(x)=\E[h(x;\xi)]$, where $\xi$ is a random element,
$h(x;\xi)\in\R^m$ is an integrable random constraint vector,
and $\E$ denotes expectation with respect to $\xi$.
The observations introduced later estimate this population
constraint value.

Fix a symmetric positive definite matrix
$D\in\R^{m\times m}$ and a scalar $\kappa>0$.
The matrix $D$ determines the augmentation and projection
metric, while $\kappa$ scales the multiplier dynamics.
For any symmetric positive definite matrix $M$ of the
appropriate dimension, define
\[
\|v\|_M^2\coloneqq v^\top Mv.
\]
Unsubscripted vector norms are Euclidean, and
unsubscripted matrix norms are the corresponding induced
operator norms.

For $w\in\R^m$, define the metric projection onto $\Ccal$ by
\begin{equation}
P_D(w)\coloneqq
\argmin_{v\in\Ccal}\frac12\|v-w\|_{D^{-1}}^2.
\label{eq:projection}
\end{equation}
The minimizer is unique because $\Ccal$ is nonempty, closed,
and convex, and the objective is strictly convex and coercive.

For $(x,u)\in\R^n\times\R^m$, define
\begin{equation}
\begin{aligned}
\lambda(x,u)&\coloneqq P_D(u+Dc(x)),
&
d(x,u)&\coloneqq\lambda(x,u)-u,
\\
G(x,u)&\coloneqq\nabla f(x)+J_c(x)^\top\lambda(x,u),
&
F(x,u)&\coloneqq
\begin{pmatrix}
-G(x,u)\\
\kappa d(x,u)
\end{pmatrix}.
\end{aligned}
\label{eq:signals}
\end{equation}
Here $\lambda(x,u)\in\Ccal$ is the augmented multiplier signal,
$d(x,u)\in\R^m$ is its difference from the current multiplier,
$G(x,u)\in\R^n$ determines the primal descent direction,
and $F(x,u)\in\R^{n+m}$ is the joint vector field.

For $D=\rho I_m$ with $\rho>0$ and
$\K=\R_+^m$, the signal reduces to
$\lambda(x,u)=[u+\rho c(x)]_+$.
Here $I_m$ is the $m\times m$ identity matrix and
$[a]_+$ denotes the vector with components
$[a_i]_+\coloneqq\max\{a_i,0\}$.

The weighted augmented Lagrangian $\mathcal L_D$ satisfies
\begin{equation}
\begin{aligned}
\mathcal L_D(x,u)
&\coloneqq
f(x)+\frac12\left(
\|P_D(u+Dc(x))\|_{D^{-1}}^2
-\|u\|_{D^{-1}}^2
\right),
\\
\nabla_x\mathcal L_D(x,u)&=G(x,u),
~~
\nabla_u\mathcal L_D(x,u)=D^{-1}d(x,u),
\end{aligned}
\label{eq:augmented}
\end{equation}
where $\nabla_x$ and $\nabla_u$ denote gradients with respect
to the indicated variables.
Appendix~\ref{app:projection} derives these identities.

With exact constraint values, the ideal dynamics are
\begin{equation}
\dot x=-G(x,u),
~~
\dot u=\kappa d(x,u),
~~
u(0)\in\Ccal.
\label{eq:ode}
\end{equation}
A dot denotes differentiation with respect to the evolution
time $t\ge0$.
Thus the joint state $z(t)\coloneqq(x(t),u(t))$ satisfies
$\dot z(t)=F(x(t),u(t))$.

For $u\in\Ccal$, the normal cone to $\Ccal$ at $u$ is
\[
N_{\Ccal}(u)\coloneqq
\{s\in\R^m:
\langle s,v-u\rangle\le0
\text{ for every }v\in\Ccal\}.
\]
For $u\notin\Ccal$, set $N_{\Ccal}(u)\coloneqq\varnothing$.

\begin{proposition}[Complementarity and equilibria]
\label{prop:zeros}
The maps in \eqref{eq:signals} satisfy
\begin{equation}
c(x)-D^{-1}d(x,u)
\in N_{\Ccal}(\lambda(x,u)).
\label{eq:normal}
\end{equation}
Moreover, $d(x,u)=0$ if and only if
\(
u\in\Ccal,
c(x)\in-\K,
\langle u,c(x)\rangle=0.
\)
Consequently, $F(x,u)=0$ if and only if
$(x,u)\in\Zstar$.
\end{proposition}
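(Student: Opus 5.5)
The plan is to start from the variational characterization of the metric projection $P_D$ and then read off the three claims in turn. Since $\lambda=\lambda(x,u)$ minimizes the strictly convex function $v\mapsto\frac12\|v-w\|_{D^{-1}}^2$ over $\Ccal$ with $w=u+Dc(x)$, the first-order optimality condition gives $\langle D^{-1}(\lambda-w),v-\lambda\rangle\ge0$ for every $v\in\Ccal$. This says $D^{-1}(w-\lambda)\in N_{\Ccal}(\lambda)$. Substituting $w-\lambda=Dc(x)-d(x,u)$ yields $D^{-1}(w-\lambda)=c(x)-D^{-1}d(x,u)$, which is exactly \eqref{eq:normal}.

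For the characterization of $d=0$, I will use two standard facts about cones. First, the normal cone of the closed convex cone $\Ccal$ at a point $\lambda\in\Ccal$ is
\[
N_{\Ccal}(\lambda)=\{s: s\in-\Ccal^*,\ \langle s,\lambda\rangle=0\}.
\]
This follows by testing $v=0$, $v=2\lambda$ and $v=\lambda+v'$ with $v'\in\Ccal$. Second, $\Ccal^*=\K^{**}=\K$ because $\K$ is a closed convex cone (bipolar theorem).

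\emph{Forward direction.} If $d=0$, then $\lambda=u$, so $u\in\Ccal$ and $c(x)\in N_{\Ccal}(u)$. The description of the normal cone then gives $c(x)\in-\K$ and $\langle u,c(x)\rangle=0$.

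\emph{Converse.} If $u\in\Ccal$, $c(x)\in-\K$ and $\langle u,c(x)\rangle=0$, then $c(x)=D^{-1}(w-u)\in N_{\Ccal}(u)$. This is precisely the optimality condition showing that $u$ minimizes the projection objective. By uniqueness of the projection, stated after \eqref{eq:projection}, we get $\lambda=u$, so $d=0$.

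Finally, since $\kappa>0$, $F=0$ holds iff $d=0$ and $G=0$. When $d=0$ we have $\lambda=u$, so $G=\nabla f(x)+J_c(x)^\top u$. Combining this with the previous paragraph gives exactly the defining conditions of $\Zstar$; conversely, any point of $\Zstar$ gives $d=0$ and hence $G=0$.

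The main obstacle is the cone bookkeeping in the second step. One must check carefully that the projection in the $D^{-1}$ metric has its optimality condition in the form $D^{-1}(w-\lambda)\in N_{\Ccal}(\lambda)$ with the Euclidean normal cone. This works because the gradient of the objective is $D^{-1}(v-w)$ and the normal cone is defined with the Euclidean inner product. One must also check that the bipolar identity $\K^{**}=\K$ applies, which uses the closedness of $\K$.
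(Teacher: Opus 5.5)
Your proof is correct and follows the paper's argument essentially step for step. Projection optimality gives \eqref{eq:normal}. The identity $N_{\Ccal}(u)=(-\K)\cap u^\perp$ then gives both directions of the characterization of $d=0$; the paper simply asserts it from $\Ccal=\K^*$ and closedness of $\K$, while you derive it explicitly with the test points $v=0$, $v=2\lambda$, $v=\lambda+v'$ and the bipolar theorem. Finally, $\kappa>0$ reduces $F=0$ to $d=0$ plus KKT stationarity. The only point to state explicitly in your converse is that the first-order condition is sufficient for the convex projection problem, so uniqueness of the minimizer forces $\lambda=u$.
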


\begin{proof}
For brevity, write $\lambda=\lambda(x,u)$ and
$d=d(x,u)$.
Optimality in \eqref{eq:projection} gives
\[
D^{-1}(u+Dc(x)-\lambda)\in N_{\Ccal}(\lambda),
\]
which is \eqref{eq:normal}.

Since $\Ccal=\K^*$ and $\K$ is a closed convex cone,
\(
N_{\Ccal}(u)=(-\K)\cap u^\perp
~ (u\in\Ccal),
\)
where
$u^\perp\coloneqq\{s\in\R^m:\langle s,u\rangle=0\}$.
If $d=0$, then $\lambda=u\in\Ccal$, and
\eqref{eq:normal} gives feasibility and complementarity.
Conversely, these conditions imply
$c(x)\in N_{\Ccal}(u)$.
The projection optimality condition then gives
$u=P_D(u+Dc(x))$, so $d=0$.
At such a point, $G(x,u)=0$ is exactly the KKT
stationarity condition.
\end{proof}

Define the equilibrium residual
\begin{equation}
\mathcal R(x,u)\coloneqq
\|G(x,u)\|^2+\|d(x,u)\|^2.
\label{eq:residual}
\end{equation}
By Proposition~\ref{prop:zeros},
$\mathcal R(x,u)=0$ exactly at the KKT points.

\subsection{Energy dissipation of the ideal dynamics}

Fix a KKT reference point
$z_*\coloneqq(x_*,u_*)\in\Zstar$ and write
$c_*\coloneqq c(x_*)$.
Define the energy relative to this point by
\begin{equation}
V_*(x,u)\coloneqq
\frac12\|x-x_*\|^2
+\frac1{2\kappa}\|u-u_*\|_{D^{-1}}^2.
\label{eq:energy}
\end{equation}
Throughout this subsection, $\lambda$ and $d$ without
arguments mean $\lambda(x,u)$ and $d(x,u)$.
For a joint state $z=(x,u)$, we also write
$V_*(z)=V_*(x,u)$.

The normal cone relation \eqref{eq:normal} and
$c_*\in N_{\Ccal}(u_*)$ imply
\[
\mathcal N_*(x,u)\coloneqq
\langle\lambda-u_*,
c(x)-c_*-D^{-1}d\rangle\ge0.
\]
The quantity $\mathcal N_*$ is the nonnegative term supplied
by monotonicity of the normal cone.
Since $u-u_*=(\lambda-u_*)-d$, its definition yields
\begin{equation}
\langle u-u_*,D^{-1}d\rangle
=
\langle\lambda-u_*,c(x)-c_*\rangle
-\|d\|_{D^{-1}}^2-\mathcal N_*(x,u).
\label{eq:supply}
\end{equation}

Define the first order remainders of the objective and
constraint map by
\[
\begin{aligned}
B_f(x_*,x)
&\coloneqq
f(x_*)-f(x)-\langle\nabla f(x),x_*-x\rangle,
\\
B_c(x_*,x)
&\coloneqq
c(x_*)-c(x)-J_c(x)(x_*-x),
\end{aligned}
\]
and define the Lagrangian gap at the reference multiplier by
\[
Q_*(x)\coloneqq L(x,u_*)-L(x_*,u_*).
\]
Here $B_f$ and $Q_*$ are scalar valued, while
$B_c(x_*,x)\in\R^m$.
Convexity gives $B_f(x_*,x)\ge0$ and
$B_c(x_*,x)\in\K$.
Also $Q_*(x)\ge0$, because the KKT stationarity condition
makes $x_*$ a global minimizer of the convex function
$L(\cdot,u_*)$.

The total dissipation relative to $z_*$ is
\begin{equation}
\begin{aligned}
\Dcal_*(x,u)\coloneqq{}
B_f(x_*,x)+Q_*(x)
+\langle\lambda,B_c(x_*,x)\rangle
+\|d\|_{D^{-1}}^2+\mathcal N_*(x,u).
\end{aligned}
\label{eq:dissipation}
\end{equation}
As with the energy, write
$\Dcal_*(z)=\Dcal_*(x,u)$ when $z=(x,u)$.

\begin{lemma}[Exact energy identity]
\label{lem:energy}
Under Assumption~\ref{ass:convex}, $\Dcal_*(x,u)\ge0$,
and every solution of \eqref{eq:ode} satisfies
\begin{equation}
\frac{d}{dt}V_*(x(t),u(t))
=-\Dcal_*(x(t),u(t)).
\label{eq:energy_identity}
\end{equation}
\end{lemma}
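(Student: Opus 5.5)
Nonnegativity of $\Dcal_*$ is immediate from the facts collected before the lemma. We have $B_f(x_*,x)\ge0$ and $Q_*(x)\ge0$. Since $\lambda\in\Ccal=\K^*$ and $B_c(x_*,x)\in\K$, the term $\langle\lambda,B_c(x_*,x)\rangle$ is nonnegative. The remaining terms satisfy $\|d\|_{D^{-1}}^2\ge0$ and $\mathcal N_*\ge0$, the latter by monotonicity of the normal cone applied to \eqref{eq:normal} and $c_*\in N_{\Ccal}(u_*)$.

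For the identity, I would differentiate $V_*$ along a solution of \eqref{eq:ode}. The right-hand side is locally Lipschitz because $P_D$ is nonexpansive in the $D^{-1}$ metric and $\nabla f$, $J_c$ are locally Lipschitz, so solutions are $C^1$ and the chain rule applies:
\[
\frac{d}{dt}V_*=-\langle x-x_*,G(x,u)\rangle+\langle u-u_*,D^{-1}d\rangle .
\]
The dual term is handled by \eqref{eq:supply}. It equals $\langle\lambda-u_*,c(x)-c_*\rangle-\|d\|_{D^{-1}}^2-\mathcal N_*$, which already produces the last two dissipation terms.

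The main step is to show that
\[
-\langle x-x_*,\nabla f(x)+J_c(x)^\top\lambda\rangle+\langle\lambda-u_*,c(x)-c_*\rangle
= -B_f(x_*,x)-Q_*(x)-\langle\lambda,B_c(x_*,x)\rangle .
\]
To do this, I would substitute $\langle\nabla f(x),x_*-x\rangle=f(x_*)-f(x)-B_f$ and $J_c(x)(x_*-x)=c_*-c(x)-B_c$ into the left side. The left side then becomes
\[
f(x_*)-f(x)-B_f+\langle\lambda,c_*-c(x)\rangle-\langle\lambda,B_c\rangle+\langle\lambda,c(x)-c_*\rangle-\langle u_*,c(x)-c_*\rangle .
\]
The two $\lambda$ terms cancel. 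What remains is $f(x_*)-f(x)-\langle u_*,c(x)-c_*\rangle-B_f-\langle\lambda,B_c\rangle$, and the first three of these terms combine to $-(L(x,u_*)-L(x_*,u_*))=-Q_*(x)$. Combining this with the dual term from \eqref{eq:supply} gives $\frac{d}{dt}V_*=-\Dcal_*$.

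The only delicate point is bookkeeping the signs so that the $\lambda$-dependent linear terms cancel exactly. This is where \eqref{eq:supply} is essential, since it converts $u-u_*$ into $\lambda-u_*$. Note also that complementarity $\langle u_*,c_*\rangle=0$ is not needed separately, because it is absorbed into $Q_*$.
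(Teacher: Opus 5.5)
Your proposal is correct and follows essentially the same route as the paper: differentiate $V_*$, rewrite the dual term using \eqref{eq:supply}, and identify the remaining linear terms with $B_f+Q_*+\langle\lambda,B_c\rangle$. You spell out the cancellation of the $\lambda$ terms explicitly where the paper just says ``expanding the definitions,'' and your termwise nonnegativity argument matches the paper's.
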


\begin{proof}
Suppress the time arguments along the trajectory.
Differentiating \eqref{eq:energy} and applying
\eqref{eq:supply} gives
\[
\begin{aligned}
\dot V_*={}
-\langle x-x_*,
\nabla f(x)+J_c(x)^\top\lambda\rangle
+\langle\lambda-u_*,c(x)-c_*\rangle
-\|d\|_{D^{-1}}^2-\mathcal N_*(x,u).
\end{aligned}
\]
Expanding the definitions yields
\[
\begin{aligned}
\langle x-x_*,
\nabla f(x)+J_c(x)^\top\lambda\rangle
-\langle\lambda-u_*,c(x)-c_*\rangle
 =
B_f(x_*,x)+Q_*(x)
+\langle\lambda,B_c(x_*,x)\rangle.
\end{aligned}
\]
Substitution proves \eqref{eq:energy_identity}.
Every term in \eqref{eq:dissipation} is nonnegative:
the first two by convexity, the third by
$\lambda\in\Ccal=\K^*$ and $B_c(x_*,x)\in\K$,
the fourth by positive definiteness of $D$, and the last
by normal cone monotonicity.
\end{proof}

\begin{theorem}[Convergence of the ideal dynamics]
\label{thm:ideal}
Under Assumption~\ref{ass:convex}, every initial condition
$x(0)\in\R^n$ and $u(0)\in\Ccal$ generates a unique solution
of \eqref{eq:ode} for all $t\ge0$.
The solution is bounded and converges to a point of $\Zstar$.
\end{theorem}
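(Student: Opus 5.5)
The proof will follow the standard route for monotone primal--dual flows: local existence and invariance of $\Ccal$, global existence and boundedness from the energy identity, then LaSalle-type invariance followed by an Opial-type argument that upgrades set convergence to point convergence.

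\emph{Existence, uniqueness, and invariance of $\Ccal$.} The projection $P_D$ is nonexpansive in the $\|\cdot\|_{D^{-1}}$ norm, hence globally Lipschitz. Together with local Lipschitz continuity of $c$, $\nabla f$ and $J_c$ from Assumption~\ref{ass:convex}, this makes $\lambda$, $d$, $G$ and $F$ locally Lipschitz, so Picard--Lindel\"of gives a unique maximal solution on some interval $[0,T_{\max})$. To see that $u(t)\in\Ccal$ is preserved, write $\dot u=\kappa(\lambda-u)$ with $\lambda(t)\in\Ccal$. Variation of constants gives
\[
u(t)=e^{-\kappa t}u(0)+\int_0^t\kappa e^{-\kappa(t-s)}\lambda(s)\,ds,
\]
which is a limit of convex combinations of points of $\Ccal$, and $\Ccal$ is a closed convex cone. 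Thus $u(t)\in\Ccal$ throughout, although this fact is not needed for the energy identity itself.

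\emph{Boundedness and global existence.} Fix any $z_*\in\Zstar$. By Lemma~\ref{lem:energy}, $V_*(z(t))$ is nonincreasing, so the trajectory remains in a compact sublevel set. This excludes finite-time blow-up, so $T_{\max}=\infty$ and the solution is bounded. Integrating \eqref{eq:energy_identity} gives $\int_0^\infty\Dcal_*(z(t))\,dt\le V_*(z(0))<\infty$. Moreover, $V_*(z(t))$ converges for every $z_*\in\Zstar$, since Lemma~\ref{lem:energy} applies to each KKT point.

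\emph{Identifying limit points as KKT points.} This is the main obstacle. The dissipation $\Dcal_*$ does not contain $\|G\|^2$, so integrability of $\Dcal_*$ does not directly give stationarity. I would use LaSalle's invariance principle on the $\omega$-limit set $\Omega$, which is nonempty, compact and invariant, with $\Dcal_*\equiv0$ on $\Omega$ because $V_*$ is constant there. Take a solution $\bar z(t)=(\bar x,\bar u)$ lying in $\Omega$. From $\|d\|_{D^{-1}}^2\le\Dcal_*=0$ we get $d\equiv0$, so $\bar u$ is constant, $\lambda=\bar u$, and by Proposition~\ref{prop:zeros} we have $c(\bar x)\in-\K$ and $\langle\bar u,c(\bar x)\rangle=0$ for all $t$. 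Then $\dot{\bar x}=-\nabla_x L(\bar x,\bar u)$, which is a gradient flow of the convex function $L(\cdot,\bar u)$. From $Q_*=0$ and $B_f=0$ I want to show that $\bar x$ is stationary. Along this flow $V_*=\tfrac12\|\bar x-x_*\|^2+\text{const}$ is constant, so $\langle \bar x-x_*,\nabla_xL(\bar x,\bar u)\rangle=0$. By convexity of $L(\cdot,\bar u)$,
\[
L(x_*,\bar u)\ge L(\bar x,\bar u)+\langle\nabla_xL(\bar x,\bar u),x_*-\bar x\rangle=L(\bar x,\bar u).
\]
Hence $L(\bar x,\bar u)$ is bounded above by the constant $L(x_*,\bar u)$ and is nonincreasing along the gradient flow. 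To close the argument I would combine this with $Q_*(\bar x)=0$ and complementarity of both pairs, which gives $L(\bar x,\bar u)=f(\bar x)=f(x_*)=L(x_*,u_*)$. I also expect $L(x_*,\bar u)=f(x_*)+\langle\bar u,c_*\rangle\le f(x_*)$, since $\bar u\in\Ccal$ and $c_*\in-\K$. These together force $\langle\bar u,c_*\rangle=0$ and show that $\bar x$ minimizes $L(\cdot,\bar u)$ over the compact orbit closure. Since $L(\bar x(t),\bar u)$ is then constant, its derivative gives $\|\nabla_xL\|^2=-\frac{d}{dt}L=0$, so $G=0$ and $\bar z\in\Zstar$.

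\emph{Point convergence.} Once $\Omega\subseteq\Zstar$ is established, pick $\hat z\in\Omega$ and reapply the energy with reference point $z_*=\hat z$. The energy $V_{\hat z}(z(t))$ converges, and along a subsequence converging to $\hat z$ its limit is $0$. Hence $z(t)\to\hat z$, which is a single point of $\Zstar$.
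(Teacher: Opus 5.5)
Your overall route is the paper's: invariance of $\Ccal$ by variation of constants, boundedness and global existence from Lemma~\ref{lem:energy}, LaSalle on the zero-dissipation set, $d=0$ giving a constant multiplier with feasibility and complementarity, constancy of $L(\bar x(t),\bar u)$ forcing $G=0$, and point convergence by re-centering the energy at a limit point.

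The gap is your claim that $Q_*(\bar x)=0$ and complementarity of both pairs give $f(\bar x)=f(x_*)$. They do not. Together with $\langle u_*,c_*\rangle=0$, the condition $Q_*(\bar x)=0$ only says
\[
f(\bar x)+\langle u_*,c(\bar x)\rangle=f(x_*),
\]
and neither complementarity relation controls the cross term $\langle u_*,c(\bar x)\rangle$. You then use this unproved equality to deduce $\langle\bar u,c_*\rangle=0$ and the constancy of $L(\bar x(t),\bar u)$. So stationarity is not established at the one step where the invariant-set argument is delicate.

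The paper closes this step with a term you never invoke, $\mathcal N_*=0$, which is part of $\Dcal_*=0$. With $d=0$ and $\lambda=\bar u$ it reads $-\langle\bar u,c_*\rangle-\langle u_*,c(\bar x)\rangle=0$. Both terms are nonnegative, since $\bar u,u_*\in\Ccal$ and $c_*,c(\bar x)\in-\K$, so both cross terms vanish. Then $Q_*(\bar x)=0$ gives $f(\bar x)=f(x_*)$.

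Alternatively, your own ingredients suffice if you chain them in the opposite order:
\[
f(\bar x)=L(\bar x,\bar u)\le L(x_*,\bar u)=f(x_*)+\langle\bar u,c_*\rangle\le f(x_*),
\]
while $Q_*(\bar x)=0$ and $\langle u_*,c(\bar x)\rangle\le0$ give $f(\bar x)\ge f(x_*)$. This variant replaces the normal-cone term by constancy of $V_*$ and convexity of $L(\cdot,\bar u)$. In either version $L(\bar x(t),\bar u)=f(x_*)$ for all $t$, and your final step $\|G\|^2=-\frac{d}{dt}L=0$ then goes through.
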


\begin{proof}
The vector field $F$ is locally Lipschitz, so a unique local
solution exists.
Along this solution, the multiplier satisfies
\[
u(t)=e^{-\kappa t}u(0)
+\kappa\int_0^t
e^{-\kappa(t-s)}\lambda(x(s),u(s))\,ds
\in\Ccal,
\]
because $\Ccal$ is a closed convex cone.
By Lemma~\ref{lem:energy}, the joint state remains in the
compact sublevel set
\[
\left\{
(x,u)\in\R^n\times\Ccal:
V_*(x,u)\le V_*(x(0),u(0))
\right\}.
\]
The solution is therefore bounded and extends to all
$t\ge0$.

To identify the invariant part of the zero dissipation set,
consider a trajectory along which $\Dcal_*=0$.
Since all terms in \eqref{eq:dissipation} are nonnegative,
the term $\|d\|_{D^{-1}}^2$ gives $d=0$.
Thus $u$ is constant and $\lambda=u$.
Proposition~\ref{prop:zeros} gives feasibility and
complementarity.
Using these conditions at $(x,u)$ and at $(x_*,u_*)$,
the definition of $\mathcal N_*$ becomes
\[
\mathcal N_*(x,u)
=-\langle u,c_*\rangle-\langle u_*,c(x)\rangle.
\]
Both terms on the right are nonnegative.
Their sum is zero, hence
$\langle u_*,c(x)\rangle=0$.
Since $Q_*(x)=0$ and
$\langle u_*,c_*\rangle=0$, it follows that
$f(x)=f(x_*)$.

Along this trajectory, complementarity and constancy of $u$
give
\[
L(x(t),u)=f(x_*),
~~
\frac{d}{dt}L(x(t),u)
=-\|G(x(t),u)\|^2.
\]
Therefore $G=0$.
The largest invariant subset of the zero dissipation set
consists exactly of KKT equilibria.
LaSalle's invariance principle \citep{lasalle1976} then
places every accumulation point of the trajectory in $\Zstar$.

Let
$z^\infty\coloneqq(x^\infty,u^\infty)\in\Zstar$
be one accumulation point.
The energy relative to this point is
\[
V_\infty(x,u)\coloneqq
\frac12\|x-x^\infty\|^2
+\frac1{2\kappa}\|u-u^\infty\|_{D^{-1}}^2.
\]
Lemma~\ref{lem:energy}, applied with reference point
$z^\infty$, shows that $V_\infty(x(t),u(t))$ is
nonincreasing.
It tends to zero along a subsequence of times approaching
infinity, and therefore tends to zero along the full
trajectory.
Consequently, $(x(t),u(t))\to(x^\infty,u^\infty)$.
\end{proof}

The invariant set argument is needed because zero
dissipation at one state does not imply stationarity.
For example, take $n=m=1$, $\K=\R_+$,
$f(x)=x^2/2$, $c(x)=x$, $D=1$, and the KKT reference
point $(x_*,u_*)=(0,0)$.
At a state with $x=0$ and $u>0$, one has
$\Dcal_*(x,u)=0$ but $G(x,u)=u\ne0$.

If $f$ is $\mu$-strongly convex for some $\mu>0$, meaning
that
\[
f(y)\ge
f(x)+\langle\nabla f(x),y-x\rangle
+\frac{\mu}{2}\|y-x\|^2
~~\text{for all }x,y\in\R^n,
\]
then \eqref{eq:dissipation} also gives
\begin{equation}
\Dcal_*(x,u)\ge
\mu\|x-x_*\|^2+\|d(x,u)\|_{D^{-1}}^2.
\label{eq:strong}
\end{equation}
Indeed, $B_f(x_*,x)\ge\mu\|x-x_*\|^2/2$.
The function $L(\cdot,u_*)$ is also $\mu$-strongly convex
and is minimized at $x_*$, so
$Q_*(x)\ge\mu\|x-x_*\|^2/2$.

\section{Equilibrium bias from sampled constraints}
\label{sec:distortion}

\subsection{Effect of direct constraint observations}
\label{sec:bias}

Consider componentwise inequalities, with $\K=\R_+^m$
and $D=\rho I_m$ for a fixed augmentation scale $\rho>0$.
At a fixed state $(x,u)\in\R^n\times\R_+^m$, suppose the
constraint observation is
\(
\widehat c\coloneqq c(x)+\epsilon,
\)
where $\epsilon\in\R^m$ is an integrable random error with
$\E\epsilon=0$.
Throughout this subsection, expectations are conditional
on the fixed state; the error distribution may depend on
that state.

Define the exact signal argument and its scaled error by
\[
s\coloneqq u+\rho c(x),
~~
\varepsilon\coloneqq\rho\epsilon.
\]
Direct sampling replaces the exact multiplier signal
$\lambda(x,u)=[s]_+$ by
\[
\widehat\lambda\coloneqq[s+\varepsilon]_+.
\]
With exact objective gradients and constraint Jacobians,
the resulting mean vector field is
\[
\bar F(x,u)\coloneqq
\begin{pmatrix}
-\nabla f(x)-J_c(x)^\top\E\widehat\lambda\\
\kappa(\E\widehat\lambda-u)
\end{pmatrix}.
\]
Thus the mean dynamics satisfy
$(\dot x,\dot u)=\bar F(x,u)$.
The following scalar calculation characterizes when a KKT point
remains an equilibrium of the mean dynamics.

\begin{proposition}[Bias and preservation of equilibria]
\label{prop:bias}
Let $\varepsilon$ be an integrable scalar random variable
with $\E\varepsilon=0$.
For a scalar argument $s\in\R$, define
\begin{equation}
b(s)\coloneqq
\E[s+\varepsilon]_+-[s]_+
=
\frac{\E|s+\varepsilon|-|s|}{2}.
\label{eq:bias}
\end{equation}
Then $b(s)\ge0$, and
\begin{equation}
b(s)=0
~\Longleftrightarrow~
\begin{cases}
s+\varepsilon\ge0\text{ almost surely},&s>0,\\
s+\varepsilon\le0\text{ almost surely},&s<0,\\
\varepsilon=0\text{ almost surely},&s=0.
\end{cases}
\label{eq:bias_zero}
\end{equation}

For the vector constraint problem, let
$(x_*,u_*)\in\Zstar$ and
$s_*\coloneqq u_*+\rho c(x_*)$.
With exact derivatives and unbiased constraint observations,
$(x_*,u_*)$ is an equilibrium of $\bar F$ if and only if
\eqref{eq:bias_zero} holds for every pair
$(s_{*,i},\varepsilon_i)$, $i=1,\ldots,m$, under the
error distribution at $(x_*,u_*)$.
\end{proposition}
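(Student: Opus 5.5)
The scalar part rests on the identity $[a]_+=(a+|a|)/2$. Since $\E\varepsilon=0$, we get $\E[s+\varepsilon]_+=(s+\E|s+\varepsilon|)/2$, and subtracting $[s]_+=(s+|s|)/2$ gives the second expression in \eqref{eq:bias}. Jensen's inequality for the convex map $|\cdot|$ then gives $\E|s+\varepsilon|\ge|\E(s+\varepsilon)|=|s|$, so $b(s)\ge0$.

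For the characterization \eqref{eq:bias_zero}, I would write $\E|s+\varepsilon|-|s|=\E\bigl[|s+\varepsilon|-\operatorname{sgn}(s)(s+\varepsilon)\bigr]$ when $s\ne0$. This uses $\E(s+\varepsilon)=s$ and $|s|=\operatorname{sgn}(s)s$. The integrand is nonnegative, so its expectation vanishes exactly when $|s+\varepsilon|=\operatorname{sgn}(s)(s+\varepsilon)$ almost surely. For $s>0$ this means $s+\varepsilon\ge0$ a.s., and for $s<0$ it means $s+\varepsilon\le0$ a.s. For $s=0$ the condition becomes $\E|\varepsilon|=0$, that is, $\varepsilon=0$ a.s. Conversely, each listed condition makes the integrand vanish a.s., so $b(s)=0$. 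This is the only slightly delicate step. One must use the nonnegative integrand rather than strict Jensen, so that no further distributional assumption is needed.

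For the vector statement, I would compute $\E\widehat\lambda$ componentwise. Its $i$th component is $[s_{*,i}]_++b_i(s_{*,i})$, where $b_i$ is the bias function built from $\varepsilon_i$. Since $(x_*,u_*)\in\Zstar$, Proposition~\ref{prop:zeros} gives $d(x_*,u_*)=0$, so $[s_*]_+=\lambda(x_*,u_*)=u_*$. Hence $\E\widehat\lambda=u_*+\beta$ with $\beta\coloneqq(b_1(s_{*,1}),\dots,b_m(s_{*,m}))^\top\ge0$. The dual component of $\bar F(x_*,u_*)$ is then $\kappa\beta$. The primal component is $-\nabla f(x_*)-J_c(x_*)^\top u_*-J_c(x_*)^\top\beta=-J_c(x_*)^\top\beta$, by KKT stationarity.

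It follows that $\bar F(x_*,u_*)=0$ if and only if $\beta=0$. The dual component alone forces this, because $\kappa>0$, and $\beta=0$ clearly makes the primal component vanish too. By the scalar characterization, $\beta=0$ holds exactly when \eqref{eq:bias_zero} holds for each pair $(s_{*,i},\varepsilon_i)$. I expect no real obstacle here: integrability of $\epsilon$ makes all expectations finite, and the conditions refer to the error distribution at $(x_*,u_*)$ as stated.
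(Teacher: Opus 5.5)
Your proposal is correct and follows essentially the same route as the paper. It uses the identity $[t]_+=(t+|t|)/2$ with Jensen for $b(s)\ge0$. Your nonnegative integrand $|s+\varepsilon|-\operatorname{sgn}(s)(s+\varepsilon)$ equals $2[-s-\varepsilon]_+$ or $2[s+\varepsilon]_+$, which are exactly the case expressions the paper uses. You then reduce the vector claim to $\bar F(x_*,u_*)=\bigl(-J_c(x_*)^\top\beta,\kappa\beta\bigr)$ via $[s_*]_+=u_*$ and KKT stationarity, as the paper does.
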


\begin{proof}
For any scalar $t$, the identity
$[t]_+=(t+|t|)/2$, together with $\E\varepsilon=0$,
gives \eqref{eq:bias}.
Convexity of the absolute value gives $b(s)\ge0$.

For $s>0$, the same identity yields
\(
b(s)=\E[-s-\varepsilon]_+.
\)
For $s<0$, one has
\(
b(s)=\E[s+\varepsilon]_+.
\)
At $s=0$,
\(
b(0)=\frac12\E|\varepsilon|.
\)
In each case, the nonnegative random variable inside the
expectation has zero expectation exactly when it is zero
almost surely.
This proves \eqref{eq:bias_zero}.

For the vector problem, define the bias at the KKT point by
\[
b_{*,i}\coloneqq
\E[s_{*,i}+\varepsilon_i]_+-[s_{*,i}]_+,
~~
\boldsymbol b_*\coloneqq(b_{*,1},\ldots,b_{*,m})^\top.
\]
The KKT conditions give
$[s_*]_+=u_*$ and
$\nabla f(x_*)+J_c(x_*)^\top u_*=0$.
Consequently,
\[
\bar F(x_*,u_*)=
\begin{pmatrix}
-J_c(x_*)^\top\boldsymbol b_*\\
\kappa\boldsymbol b_*
\end{pmatrix}.
\]
Since $\kappa>0$, the multiplier component vanishes
exactly when $\boldsymbol b_*=0$.
The primal component then also vanishes.
Applying the scalar criterion to each component completes
the proof.
\end{proof}

The condition requires the perturbed argument to remain
on the same linear branch of the positive part map.
For an active constraint with $u_{*,i}>0$,
one has $c_i(x_*)=0$ and $s_{*,i}=u_{*,i}>0$.
For an inactive constraint, $c_i(x_*)<0$ and
$u_{*,i}=0$, so $s_{*,i}=\rho c_i(x_*)<0$.
At a weakly active constraint, meaning
$c_i(x_*)=u_{*,i}=0$, the argument is zero.
Any centered error that is nonzero with positive probability
then produces a strictly positive bias.
The componentwise criterion does not require independence
between the constraint errors.

\begin{corollary}[Two noise distributions]
\label{cor:distributions}
Let $\tau>0$ denote the amplitude or standard deviation
of the scalar signal error.
If $\varepsilon$ takes the values $\pm\tau$ with equal
probability, then
\[
b(s)=\frac{[\tau-|s|]_+}{2}.
\]
If $\varepsilon\sim N(0,\tau^2)$, where $N(0,\tau^2)$
denotes the normal distribution with mean zero and variance
$\tau^2$, then
\begin{equation}
b(s)=
\tau\phi(|s|/\tau)-|s|\Phi(-|s|/\tau)>0
~~\text{for every }s\in\R,
\label{eq:gaussian}
\end{equation}
where
\[
\phi(z)\coloneqq\frac{1}{\sqrt{2\pi}}e^{-z^2/2},
~~
\Phi(z)\coloneqq\int_{-\infty}^{z}\phi(v)\,dv
\]
are the standard normal density and distribution function.

For a scalar constraint value estimated by the mean of
$B$ independent unbiased Gaussian observations, where
$B$ is a positive integer and each observation has
standard deviation $\sigma>0$, the signal error has
standard deviation $\tau=\rho\sigma/\sqrt B$.
In particular,
\[
b(0)=\frac{\rho\sigma}{\sqrt{2\pi B}}.
\]
\end{corollary}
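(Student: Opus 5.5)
The computation starts from the representation in \eqref{eq:bias}, $b(s)=\tfrac12(\E|s+\varepsilon|-|s|)$, which reduces each case to evaluating one expectation. The same representation shows that $b$ is even whenever $\varepsilon$ is symmetric, and both distributions here are symmetric. So we may take $s\ge0$ throughout and write $|s|$ at the end.

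\emph{Rademacher case.} If $\varepsilon=\pm\tau$ with equal probability, then
\[
\E|s+\varepsilon|=\tfrac12\bigl(|s+\tau|+|s-\tau|\bigr).
\]
For $s\ge\tau$ this equals $s$, so $b(s)=0$. For $0\le s<\tau$ it equals $\tau$, so $b(s)=(\tau-s)/2$. Together these give $b(s)=[\tau-|s|]_+/2$. This agrees with \eqref{eq:bias_zero}: $b$ vanishes exactly when $s\pm\tau$ stay on the same branch of the positive part map.

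\emph{Gaussian case.} Write $\varepsilon=\tau Z$ with $Z$ standard normal, so that
\[
b(s)=\E[s+\tau Z]_+-s \quad (s\ge0).
\]
Expand the first term as
\[
\E[s+\tau Z]_+=\int_{-s/\tau}^{\infty}(s+\tau z)\phi(z)\,dz .
\]
Two standard facts evaluate this integral: $\int_a^\infty\phi=\Phi(-a)$, and $\int_a^\infty z\phi(z)\,dz=\phi(a)$ because $\phi'(z)=-z\phi(z)$. With $a=-s/\tau$ and symmetry of $\phi$, this gives
\[
\E[s+\tau Z]_+=s\Phi(s/\tau)+\tau\phi(s/\tau).
\]
Subtracting $s$ and using $1-\Phi(s/\tau)=\Phi(-s/\tau)$ yields \eqref{eq:gaussian}.

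For strict positivity we use \eqref{eq:bias_zero}. A normal variable with $\tau>0$ satisfies neither $s+\varepsilon\ge0$ a.s., nor $s+\varepsilon\le0$ a.s., nor $\varepsilon=0$ a.s., so $b(s)\ne0$. Since $b(s)\ge0$ by Proposition~\ref{prop:bias}, this gives $b(s)>0$. A direct alternative is to note that $g(r)=\tau\phi(r/\tau)-r\Phi(-r/\tau)$ satisfies $g'(r)=-\Phi(-r/\tau)<0$ and $g(r)\to0$ as $r\to\infty$, so $g>0$ on $[0,\infty)$.

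\emph{Sample mean.} The mean of $B$ independent $N(c,\sigma^2)$ observations is distributed as $N(c,\sigma^2/B)$. The signal error is $\varepsilon=\rho\epsilon$, so it is Gaussian with standard deviation $\tau=\rho\sigma/\sqrt B$. Evaluating \eqref{eq:gaussian} at $s=0$ gives $b(0)=\tau\phi(0)=\tau/\sqrt{2\pi}=\rho\sigma/\sqrt{2\pi B}$.

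The only steps that need care are the Gaussian integral and keeping the sign conventions straight when converting $1-\Phi$ to $\Phi(-\cdot)$; the rest is routine.
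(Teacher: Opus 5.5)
Your proof is correct and follows the same route as the paper. The two-point case is a direct expansion. The Gaussian case evaluates $\int_{-s/\tau}^{\infty}(s+\tau z)\phi(z)\,dz=s\Phi(s/\tau)+\tau\phi(s/\tau)$ and then uses symmetry. Strict positivity comes from the criterion \eqref{eq:bias_zero} of Proposition~\ref{prop:bias}, and the batch formula from the variance $\sigma^2/B$ of the sample mean. Your explicit reduction to $s\ge0$ by evenness and your alternative positivity argument via $g'(r)=-\Phi(-r/\tau)<0$ are valid small additions.
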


\begin{proof}
For the two point distribution, expand
\[
\frac{[s+\tau]_++[s-\tau]_+}{2}-[s]_+.
\]
For a standard normal random variable $Z\sim N(0,1)$,
integration gives
\[
\E[s+\tau Z]_+
=
\int_{-s/\tau}^{\infty}(s+\tau z)\phi(z)\,dz
=
s\Phi(s/\tau)+\tau\phi(s/\tau).
\]
Subtracting $[s]_+$ and using symmetry of the normal
distribution yields \eqref{eq:gaussian}.
Strict positivity follows from
Proposition~\ref{prop:bias}, because $s+\tau Z$ has
positive probability on both sides of zero.
The batch formula follows from the variance
$\sigma^2/B$ of the sample mean.
\end{proof}

The next example shows that the bias can lead to a stable
equilibrium that violates the KKT conditions.

\begin{proposition}[A stable incorrect equilibrium]
\label{prop:counterexample}
Consider
\[
\min_{x\in\R}\frac12x^2-hx
~~\text{subject to}~~ x\le0,
\]
where $h$ and $\tau$ are constants satisfying
$0<h<\tau$.
Take $D=1$, any $\kappa>0$, and a constraint observation
error $\epsilon$ that takes the values $\pm\tau$ with
equal probability.
The unique KKT point is $(0,h)$.
The mean dynamics with direct sampling also have the
locally exponentially stable equilibrium
\[
\bar x=\frac{h-\tau}{2},
~~
\bar u=\frac{h+\tau}{2},
\]
which violates complementarity.
\end{proposition}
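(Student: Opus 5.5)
The plan is to reduce everything to the explicit scalar bias formula of Corollary~\ref{cor:distributions}. After that the mean vector field is affine near the candidate point, and local exponential stability follows from a $2\times 2$ Hurwitz check.

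First, I identify the KKT set. Here $f(x)=x^2/2-hx$, $c(x)=x$, $\K=\R_+$ and $D=\rho=1$. The KKT conditions read $x-h+u=0$, $x\le0$, $u\ge0$ and $ux=0$. If $x<0$, complementarity forces $u=0$, and then stationarity gives $x=h>0$, a contradiction. Hence $x=0$ and $u=h>0$, so $(0,h)$ is the unique KKT point.

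Next, I write the mean field explicitly. Set $s=u+x$. The signal error is $\varepsilon=\epsilon=\pm\tau$, so Corollary~\ref{cor:distributions} gives
\[
\E\widehat\lambda=[s]_++\tfrac12[\tau-|s|]_+ .
\]
On the open region $\{0<s<\tau\}$ this equals $(s+\tau)/2$. The mean dynamics there are therefore exactly affine:
\[
\dot x=h-x-\tfrac12(x+u+\tau),
\qquad
\dot u=\kappa\bigl(\tfrac12(x+u+\tau)-u\bigr).
\]
At an equilibrium in this region, the $u$-equation gives $u=\E\widehat\lambda$, so the $x$-equation gives $x=h-u$ and hence $s=h$. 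This is consistent with the region because $0<h<\tau$. Then $u=(h+\tau)/2$ and $x=(h-\tau)/2$, which is the claimed point $(\bar x,\bar u)$. It has $\bar x<0$ and $\bar u>0$, so $\bar u\,\bar x\ne0$ and complementarity fails (feasibility holds, and $\bar u\in\R_+$). This also shows directly, in line with Proposition~\ref{prop:bias}, that the bias $b(h)=(\tau-h)/2>0$ at the KKT point shifts the equilibrium.

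For stability, I note that the set $\{(x,u):0<x+u<\tau\}$ is an open neighbourhood of $(\bar x,\bar u)$, since $\bar x+\bar u=h$. On it the system is linear in the deviation from $(\bar x,\bar u)$, with matrix
\[
A=\begin{pmatrix}-3/2&-1/2\\ \kappa/2&-\kappa/2\end{pmatrix}.
\]
Its trace is $-(3+\kappa)/2<0$ and its determinant is $3\kappa/4+\kappa/4=\kappa>0$, so $A$ is Hurwitz for every $\kappa>0$. This gives a quadratic Lyapunov function $W$ with $\dot W\le-cW$ on the neighbourhood. Choosing a sublevel set of $W$ contained in $\{0<s<\tau\}$ (and, since $\bar u>0$, also in $\{u>0\}$) makes that set forward invariant. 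Exponential convergence of the affine flow then holds from every initial state in it, which is local exponential stability.

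The only delicate point is this last one. The field is merely piecewise affine because of the kinks of $b$ at $|s|=\tau$ and $s=0$. So I must state that the linearization is exact, rather than appeal to a smoothness-based linearization theorem, and I must confine trajectories to a sublevel set on which the affine formula remains valid.
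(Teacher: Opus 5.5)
Your proof is correct and follows essentially the same route as the paper: the expected signal equals $(x+u+\tau)/2$ on a neighbourhood of $x+u=h$, the equilibrium follows from $\E\widehat\lambda=u$ and $x+u=h$, and the Jacobian of the affine field has trace $-(3+\kappa)/2<0$ and determinant $\kappa>0$. The paper uses the larger region $-\tau<x+u<\tau$, though your region $0<x+u<\tau$ also suffices, and your derivation of KKT uniqueness and the forward-invariant sublevel-set argument are small refinements that the paper leaves implicit.
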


\begin{proof}
Define the expected multiplier signal by
\[
\ell(x,u)\coloneqq\E[u+x+\epsilon]_+.
\]
On the open region $-\tau<u+x<\tau$, it satisfies
\[
\ell(x,u)=\frac{u+x+\tau}{2}.
\]
The mean dynamics in this region are therefore
\[
\dot x=h-x-\ell(x,u),
~~
\dot u=\kappa(\ell(x,u)-u).
\]
At an equilibrium, the second equation gives
$\ell(x,u)=u$, and the first gives $x+u=h$.
Substitution yields the stated pair $(\bar x,\bar u)$.

Since $\bar x+\bar u=h\in(0,\tau)$, this pair lies
strictly inside the region where the displayed expression
for $\ell$ holds.
Moreover, $\bar x<0$ and $\bar u>0$, so
$\bar x\bar u\ne0$ and complementarity fails.
The Jacobian of the mean vector field there is
\[
\begin{pmatrix}
-3/2&-1/2\\
\kappa/2&-\kappa/2
\end{pmatrix}.
\]
Its trace is $-(3+\kappa)/2<0$ and its determinant is
$\kappa>0$.
Both eigenvalues therefore have strictly negative real
parts, proving local exponential stability.
At the original KKT point,
\[
\ell(0,h)-h=\frac{\tau-h}{2}>0,
\]
so $(0,h)$ is not an equilibrium of the mean dynamics.
\end{proof}

The augmentation scale $\rho$ is fixed in the signal
$[u+\rho\widehat c]_+$.
In the ordinary projected multiplier update
\[
u_{k+1}=[u_k+\alpha_k\widehat c_k]_+,
\]
the constraint observation $\widehat c_k$ is instead
scaled by the optimization step $\alpha_k$.
Here $k=0,1,\ldots$ is the iteration index and
$\alpha_k>0$ decreases to zero.
The numerical studies include this projected update
alongside direct sampling of the augmented signal.

\section{Recursive constraint estimation}
\label{sec:method}

To address the equilibrium bias, we maintain an estimate
of the constraint value and use it to form the augmented
multiplier signal.
At iteration $k$, let $x_k\in\R^n$ be the primal state,
$u_k\in\Ccal$ the multiplier, and $y_k\in\R^m$ an estimate
of $c(x_k)$.
The optimization update uses $y_k$.
A fresh constraint observation at the resulting point
$x_{k+1}$ then updates the estimate.

The quantity being estimated changes as the primal state
moves, while that movement depends on the estimate and
the multiplier.
The analysis therefore treats the three states together.

\subsection{Regularity and observations}

The explicit stochastic iteration uses the following
global regularity assumptions.

\begin{assumption}[Global regularity]
\label{ass:growth}
There are finite nonnegative constants $L_f$, $M_c$,
and $L_J$ such that, for every $x,z\in\R^n$,
\[
\begin{aligned}
\|\nabla f(x)-\nabla f(z)\|
&\le L_f\|x-z\|,\\
\|J_c(x)\|&\le M_c,\\
\|J_c(x)-J_c(z)\|
&\le L_J\|x-z\|.
\end{aligned}
\]
Here $L_f$ is a Lipschitz constant for the objective
gradient, $M_c$ bounds the constraint Jacobian, and
$L_J$ is a Lipschitz constant for that Jacobian.
\end{assumption}

In particular, the bound on $J_c$ makes $c$ globally
Lipschitz.
The constraint assumptions hold for affine maps and,
for componentwise inequalities, smooth convex functions
with bounded and Lipschitz gradients, such as softplus
functions of affine arguments.
These conditions allow the update directions to be
controlled before boundedness of the iterates is known.

Let $\Fcal_k$ be the information available before the
derivative observations at iteration $k$.
It contains the initial states and all previous
observations, and $(x_k,u_k,y_k)$ is
$\Fcal_k$-measurable.
Write
\[
\E_k[\cdot]\coloneqq\E[\cdot\mid\Fcal_k].
\]
The derivative observations are
$\hat g_k\in\R^n$ and $\hat J_k\in\R^{m\times n}$,
which estimate $\nabla f(x_k)$ and $J_c(x_k)$,
respectively.

\begin{assumption}[Conditional observations]
\label{ass:oracle}
Let $B_g$, $B_J$, and $B_c$ be fixed positive integers
denoting the sample counts for the objective gradient,
constraint Jacobian, and constraint value.
There are finite nonnegative constants
$\sigma_g$, $\sigma_J$, and $\sigma_c$ such that,
for every $k$,
\[
\begin{aligned}
\E_k\hat g_k&=\nabla f(x_k),
&
\E_k\|\hat g_k-\nabla f(x_k)\|^2
&\le\frac{\sigma_g^2}{B_g},
\\
\E_k\hat J_k&=J_c(x_k),
&
\E_k\|\hat J_k-J_c(x_k)\|^2
&\le\frac{\sigma_J^2}{B_J}.
\end{aligned}
\]
These constants bound the conditional second moments
of the observation errors.

Let $\Fcal_k^+$ be the information obtained by adding
$\hat g_k$, $\hat J_k$, and the resulting states
$x_{k+1}$ and $u_{k+1}$ to $\Fcal_k$.
The constraint observation at $x_{k+1}$ is
\[
\hat c_{k+1}\coloneqq
c(x_{k+1})+\varepsilon_{k+1}\in\R^m,
\]
where $\varepsilon_{k+1}$ is the constraint observation
error and satisfies
\[
\E[\varepsilon_{k+1}\mid\Fcal_k^+]=0,
~~
\E[\|\varepsilon_{k+1}\|^2\mid\Fcal_k^+]
\le\frac{\sigma_c^2}{B_c}.
\]
The initial states satisfy
\[
\E\bigl[\|x_0\|^2+\|u_0\|^2+\|y_0\|^2\bigr]<\infty,
~~
u_0\in\Ccal~\text{almost surely}.
\]
\end{assumption}

The derivative observations may be correlated with each
other.
The constraint observation must be conditionally unbiased
after the new primal point has been determined.
Evaluating it at $x_{k+1}$ using a fresh independent sample
batch is one way to satisfy this requirement.
The observation $\hat c_{k+1}$ is then included in
$\Fcal_{k+1}$.

\subsection{Iteration and convergence theorem}

Let $\alpha_k>0$ be the optimization step and
$\gamma_k>0$ the estimation gain.
Starting from $(x_0,u_0,y_0)$, define
\begin{equation}
\begin{aligned}
\lambda_k&\coloneqq P_D(u_k+Dy_k),
~~
d_k\coloneqq\lambda_k-u_k,
\\
x_{k+1}&\coloneqq
x_k-\alpha_k(\hat g_k+\hat J_k^\top\lambda_k),
\\
u_{k+1}&\coloneqq u_k+\kappa\alpha_k d_k,
\\
y_{k+1}&\coloneqq
(1-\gamma_k)y_k+\gamma_k\hat c_{k+1}.
\end{aligned}
\label{eq:algorithm}
\end{equation}
Here $\lambda_k$ is the multiplier signal formed from
the estimate $y_k$, and $d_k$ is its difference from
the current multiplier.
The first three lines determine the new optimization
state; the last line uses the subsequent constraint
observation at $x_{k+1}$.

Take $0<\kappa\alpha_k\le1$ and $0<\gamma_k\le1$.
Since
\[
u_{k+1}=(1-\kappa\alpha_k)u_k
+\kappa\alpha_k\lambda_k,
\]
the multiplier remains in the convex cone $\Ccal$.
The initial estimate $y_0$ need not be unbiased.

Define the tracking error by
\[
e_k\coloneqq y_k-c(x_k)\in\R^m.
\]
Its convergence is coupled to that of the primal and
dual states.

\begin{theorem}[Almost sure point convergence]
\label{thm:main}
Suppose Assumptions~\ref{ass:convex},
\ref{ass:growth}, and~\ref{ass:oracle} hold.
Choose a scalar $\vartheta\in(0,1/4)$ and a fixed
time scale parameter $\tau_0\ge1$.
Define the decay exponents
\[
a\coloneqq\frac34+\vartheta,
~~
b\coloneqq\frac12+\vartheta,
\]
and choose the step sequences
\[
\alpha_k\coloneqq
\alpha_0(1+k/\tau_0)^{-a},
~~
\gamma_k\coloneqq
\gamma_0(1+k/\tau_0)^{-b},
\]
where the initial step and gain satisfy
$0<\alpha_0\le1/\kappa$ and $0<\gamma_0\le1$.

Then the iteration \eqref{eq:algorithm} satisfies
\begin{equation}
\sup_{k\ge0}\E\bigl[
\|x_k\|^2+\|u_k\|^2+\|e_k\|^2
\bigr]<\infty,
\label{eq:moments}
\end{equation}
and the sequence $(x_k,u_k,y_k)$ is almost surely bounded.
There is a random KKT point
$(x^\infty,u^\infty)\in\Zstar$ such that
\[
(x_k,u_k)\longrightarrow(x^\infty,u^\infty),
~~
e_k\longrightarrow0,
~~
\lambda_k\longrightarrow u^\infty
~\text{almost surely}.
\]
Moreover,
\[
\E\|e_k\|^2\longrightarrow0,
~~
\mathcal R(x_k,u_k)\longrightarrow0
~\text{almost surely},
\]
where $\mathcal R$ is the equilibrium residual defined
in \eqref{eq:residual}.
\end{theorem}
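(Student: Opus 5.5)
The proof will build a joint Lyapunov function and apply the Robbins--Siegmund almost-supermartingale theorem, followed by an ODE-style limit argument. Fix $z_*\in\Zstar$ and consider
\[
W_k\coloneqq V_*(x_k,u_k)+\frac{\beta_k}{2}\|e_k\|_{D}^2,
\]
with a weight $\beta_k\propto\alpha_k/\gamma_k$, or a constant if that suffices. Write $\lambda_k=\lambda(x_k,u_k)+\delta_k$. Nonexpansiveness of $P_D$ in the $D^{-1}$ metric gives $\|\delta_k\|_{D^{-1}}\le\|De_k\|_{D^{-1}}$, so $\|\delta_k\|\lesssim\|e_k\|$.

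\textbf{Primal--dual part.} Expand $V_*(x_{k+1},u_{k+1})$ and take $\E_k$. The unbiased derivative observations turn the first-order term into $-\alpha_k\Dcal_*(x_k,u_k)$, exactly as in Lemma~\ref{lem:energy}. Two further terms appear:
\begin{itemize}
\item a cross term $\alpha_k\langle\cdot,\delta_k\rangle$, bounded by $\alpha_k\|e_k\|(\|x_k-x_*\|M_c+\|d\|)$ via Assumption~\ref{ass:growth};
\item a quadratic term $\alpha_k^2\E_k\|\hat g_k+\hat J_k^\top\lambda_k\|^2+\kappa^2\alpha_k^2\|d_k\|^2$, which is $\lesssim\alpha_k^2(1+V_*+\|e_k\|^2)$ because $\nabla f$ is globally Lipschitz, $J_c$ is bounded, $\|\lambda_k\|\lesssim\|u_k\|+\|c(x_k)\|+\|e_k\|$, and $c$ is globally Lipschitz.
\end{itemize}
Split the cross term by Young's inequality as $\alpha_k\eta_k V_*+\alpha_k\eta_k^{-1}\|e_k\|^2$, with $\eta_k$ chosen summable against $\alpha_k$ (for instance $\eta_k\sim k^{-\vartheta'}$). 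The $d$-part is instead absorbed into $\alpha_k\|d\|_{D^{-1}}^2$ inside $\Dcal_*$.

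\textbf{Tracking part.} The estimation error satisfies
\[
e_{k+1}=(1-\gamma_k)e_k-(1-\gamma_k)\bigl(c(x_{k+1})-c(x_k)\bigr)+\gamma_k\varepsilon_{k+1},
\]
with $\|c(x_{k+1})-c(x_k)\|\le M_c\|x_{k+1}-x_k\|$. Hence
\[
\E_k\|e_{k+1}\|^2\le(1-\gamma_k)\|e_k\|^2+C\frac{\alpha_k^2}{\gamma_k}(1+V_*+\|e_k\|^2)+\gamma_k^2\sigma_c^2/B_c,
\]
using Young's inequality with weight $\gamma_k$ for the drift.

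\textbf{Combining.} The two recursions must be combined so that $-\beta\gamma_k\|e_k\|^2$ dominates $\alpha_k\eta_k^{-1}\|e_k\|^2$, and so that every remaining positive coefficient is summable: $\alpha_k^2$, $\gamma_k^2$, $\alpha_k^2/\gamma_k$, and $\alpha_k\eta_k$. The exponents are chosen for exactly this purpose: $2a>1$, $2b>1$, $2a-b=1+\vartheta>1$, and $a>b$ leaves room for $\eta_k$. Selecting the weight and $\eta_k$ so that all of these hold simultaneously is the step I expect to be the main obstacle. I would first try a constant weight $\beta$ with $\eta_k=\gamma_k/\alpha_k\cdot$const inverted appropriately. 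If that fails, I would use the time-varying weight $\beta_k=\alpha_k/\gamma_k^{1/2}$ and check the sign of the weight-decrease term.

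Once this is done, Robbins--Siegmund yields the following, almost surely:
\begin{itemize}
\item $W_k$ converges;
\item $\sum\alpha_k\Dcal_*(x_k,u_k)<\infty$ and $\sum\gamma_k\|e_k\|^2<\infty$ (in weighted form);
\item the moment bound \eqref{eq:moments}, obtained by taking full expectations and using Gronwall-type product bounds.
\end{itemize}
Almost-sure boundedness follows, and $e_k\to0$ follows from the tracking recursion together with bounded iterates. Convergence $\E\|e_k\|^2\to0$ follows from the expected recursion, since $\alpha_k^2/\gamma_k^2\to0$.

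\textbf{Limit identification.} Interpolate $(x_k,u_k)$ on the time scale $t_k=\sum_{j<k}\alpha_j$. The noise martingales $\sum\alpha_k(\hat g_k-\nabla f)$ and the corresponding Jacobian sums converge, since their square-summability follows from bounded iterates and $\sum\alpha_k^2<\infty$. Together with $e_k\to0$, the interpolated path is an asymptotic pseudotrajectory of \eqref{eq:ode}. The Benaïm theory then places limit sets in internally chain transitive sets of the flow. By Theorem~\ref{thm:ideal}, together with the fact that $V_*$ is a strict Lyapunov function off the invariant KKT set established in its proof, these limit sets lie in $\Zstar$.

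For point convergence, take any accumulation point $z^\infty\in\Zstar$ and rerun the Robbins--Siegmund argument with reference $z^\infty$. Since $\Zstar$ is uncountable, I would apply this to a countable dense subset of $\Zstar$ and pass to a limit, which gives that $\|z_k-z^\infty\|$ converges almost surely for all $z^\infty\in\Zstar$ simultaneously. Since this limit is zero along a subsequence, it is zero for the full sequence. Finally, $\lambda_k\to u^\infty$ and $\mathcal R\to0$ follow by continuity, because $\mathcal R$ vanishes at the KKT point and $e_k\to0$.
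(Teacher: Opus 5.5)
Your architecture is the paper's: a joint energy $V_k+w_k\|e_k\|^2$, Robbins--Siegmund, a second almost-supermartingale argument showing $e_k\to0$, an ODE limit, and a countable dense subset of $\Zstar$ for point convergence.

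\emph{The weight.} On the step you flag as the obstacle, your first choice already closes. Take a constant weight $\beta$ and Young parameter $\eta_k=K\alpha_k/\gamma_k$ with $K$ large. The cross term then contributes $C\gamma_k\|e_k\|^2/K\le\beta\gamma_k\|e_k\|^2/4$. The extra coefficient of $V_k$ is $O(\alpha_k^2/\gamma_k)$, with exponent $2a-b=1+\vartheta>1$. You get $\sum_k\gamma_k\|e_k\|^2<\infty$ and $\sup_k\E\|e_k\|^2<\infty$ at once. This is the paper's weight $w_k=L_0\alpha_k/(\delta_k\gamma_k)$ with its auxiliary $\delta_k$ taken proportional to $\alpha_k/\gamma_k$.

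Your alternatives $\beta_k\propto\alpha_k/\gamma_k$ and $\beta_k=\alpha_k/\gamma_k^{1/2}$ would fail. Dominating the cross term would force your $\eta_k$ to be bounded below, respectively $\eta_k\to\infty$, so $\sum_k\alpha_k\eta_k=\infty$.

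A small correction: with $p_k\coloneqq\lambda_k-\lambda(x_k,u_k)$, the $u$-part of the cross term is $\alpha_k\langle u_k-u_*,D^{-1}p_k\rangle$. Its size is $\|u_k-u_*\|\,\|e_k\|$, not $\|d\|\,\|e_k\|$. It cannot be absorbed into $\Dcal_*$ and must go through the $\eta_k$-split, which your Young step against $V_*$ does cover.

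\emph{The gap: limit identification.} You invoke Bena\"{\i}m's criterion that internally chain transitive sets lie in $\Lambda$ when $V$ is a strict Lyapunov function for $\Lambda$. That criterion needs $V(\Lambda)$ to have empty interior, and $\Lambda$ compact. With $V=V_*$ for one fixed $z_*$ and $\Lambda=\Zstar$, both fail in the nonunique case the theorem targets: $V_*(\Zstar)$ is a nondegenerate interval, and $\Zstar$ may be unbounded.

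Theorem~\ref{thm:ideal} does not rescue this. Consider a circle carrying an arc of equilibria, with flow along the complementary arc from one endpoint to the other. Every trajectory converges to an equilibrium and a strict Lyapunov function exists, yet the whole circle is internally chain transitive.

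The missing step uses a fact you already have. Since $W_k$ converges and $e_k\to0$, $V_*(z_k)$ converges almost surely, so $V_*$ is constant on the limit set $L$. Because $L$ is invariant, the flow line through any $\bar z\in L$ has constant energy. Hence $\Dcal_*=0$ along it, and the invariant-set argument of Theorem~\ref{thm:ideal} gives $\bar z\in\Zstar$.

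This is exactly how the paper's Lemma~\ref{lem:limit} proceeds. It uses a direct Gronwall comparison of the interpolated iterates with the ideal flow in place of asymptotic-pseudotrajectory theory. Alternatively, the distance to $\Zstar$ in the metric of $V_*$ is a strict Lyapunov function whose value set on $\Zstar$ is $\{0\}$. With this repair, your countable-dense-subset argument yields point convergence as in the paper.
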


The theorem establishes convergence to a KKT point with
fixed sample counts, including problems with nonunique
solutions and multipliers.
The parameter $\tau_0$ controls the initial decay of
the steps without changing their asymptotic exponents.
In particular,
\[
\frac{\alpha_k}{\gamma_k}
=
\frac{\alpha_0}{\gamma_0}
(1+k/\tau_0)^{-1/4}\longrightarrow0.
\]
The estimation gain therefore decreases more slowly than
the optimization step.
Section~\ref{sec:proof} uses this separation to establish
joint boundedness, prove that the tracking error vanishes,
and identify the limit of the primal--dual iterates.
\section{Convergence analysis}
\label{sec:proof}

The proof combines the energy identity of the ideal dynamics
with a recursion for the constraint tracking error.
We first derive estimates that hold without assuming bounded
iterates. A weighted sum of the two errors then establishes
joint stability. Finally, limiting trajectories recover the
KKT conditions and yield convergence to a single point.

Throughout this section, fix a KKT reference point
$z_*=(x_*,u_*)\in\Zstar$ and write $c_*=c(x_*)$.
Set $z_k\coloneqq(x_k,u_k)$ and
$V_k\coloneqq V_*(x_k,u_k)$.
For a joint state $z=(x,u)$, use the abbreviations
$F(z)=F(x,u)$ and $\Dcal_*(z)=\Dcal_*(x,u)$.
The tracking error remains
$e_k=y_k-c(x_k)$.
Constants denoted by $C$ are finite and positive.
They may change between inequalities and depend on the
problem data, the fixed reference point, and the fixed
algorithm parameters, but not on the iteration index $k$.

\subsection{Growth and coupled error estimates}

\begin{lemma}[Direction growth]
\label{lem:growth}
Under Assumptions~\ref{ass:convex} and~\ref{ass:growth},
there is a constant $C$ such that, for every
$(x,u)\in\R^n\times\Ccal$,
\begin{equation}
\|G(x,u)\|^2+\|d(x,u)\|^2
\le C V_*(x,u).
\label{eq:growth}
\end{equation}
For any constraint perturbation $e\in\R^m$, define
$\lambda_e\coloneqq P_D(u+D(c(x)+e))$.
Then
\begin{equation}
\begin{aligned}
\|\nabla f(x)+J_c(x)^\top\lambda_e\|^2
+\|\lambda_e-u\|^2\le C\bigl(V_*(x,u)+\|e\|^2\bigr).
\end{aligned}
\label{eq:perturbed_growth}
\end{equation}
\end{lemma}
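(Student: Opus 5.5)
The plan is to compare every quantity with its value at the reference KKT point $z_*=(x_*,u_*)$, where $G(x_*,u_*)=0$ and $d(x_*,u_*)=0$, and to use that $P_D$ is nonexpansive in the $D^{-1}$ metric. From the optimality characterization of the projection, $P_D$ is firmly nonexpansive in $\|\cdot\|_{D^{-1}}$. By equivalence of norms there is therefore a constant with $\|P_D(w)-P_D(w')\|\le C\|w-w'\|$ for all $w,w'\in\R^m$. It is enough to prove \eqref{eq:perturbed_growth}, since \eqref{eq:growth} is its special case $e=0$ (then $\lambda_e=\lambda(x,u)$). Recall also that $V_*(x,u)\ge c_0(\|x-x_*\|^2+\|u-u_*\|^2)$ for some $c_0>0$, because $D$ is positive definite.

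The first step is the multiplier term. Since $u_*=P_D(u_*+Dc_*)$ by Proposition~\ref{prop:zeros}, we have
\[
\|\lambda_e-u_*\|\le C\bigl(\|u-u_*\|+\|D\|\,\|c(x)-c_*\|+\|D\|\,\|e\|\bigr).
\]
By Assumption~\ref{ass:growth}, $c$ is globally Lipschitz with constant $M_c$, so this is at most $C(\|u-u_*\|+\|x-x_*\|+\|e\|)$. The triangle inequality then gives $\|\lambda_e-u\|\le\|\lambda_e-u_*\|+\|u-u_*\|$, and squaring with $(a+b+c)^2\le3(a^2+b^2+c^2)$ bounds $\|\lambda_e-u\|^2$ by $C(V_*+\|e\|^2)$.

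The second step is the primal term. Using the KKT stationarity $\nabla f(x_*)+J_c(x_*)^\top u_*=0$, we decompose
\[
\nabla f(x)+J_c(x)^\top\lambda_e
=\bigl(\nabla f(x)-\nabla f(x_*)\bigr)
+J_c(x)^\top(\lambda_e-u_*)
+\bigl(J_c(x)-J_c(x_*)\bigr)^\top u_*.
\]
Assumption~\ref{ass:growth} bounds the three pieces by $L_f\|x-x_*\|$, $M_c\|\lambda_e-u_*\|$, and $L_J\|u_*\|\,\|x-x_*\|$, respectively. Here $\|u_*\|$ is a fixed constant attached to the reference point, which the convention on constants allows. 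Combining this with the bound on $\|\lambda_e-u_*\|$ from the first step and squaring yields the claim.

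I expect the only delicate point to be the Jacobian term. The factor $J_c(x)$ multiplies $\lambda_e$, which is unbounded a priori. This is why I center at $u_*$ and rely on the global bound $M_c$ rather than a local one. The remaining piece, $(J_c(x)-J_c(x_*))^\top u_*$, multiplies only the fixed vector $u_*$, so the global Lipschitz constant $L_J$ controls it linearly in $\|x-x_*\|$. Without the global assumptions, the growth would be superquadratic and the bound would fail.
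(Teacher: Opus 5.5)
Your proof is correct and follows essentially the paper's argument. It uses nonexpansiveness of $P_D$ together with $P_D(u_*+Dc_*)=u_*$ and global Lipschitz continuity of $c$ to control the signal around $u_*$, followed by the same three-term decomposition of the primal direction via KKT stationarity; the only difference is that you bound $\lambda_e$ directly and recover \eqref{eq:growth} at $e=0$, whereas the paper proves the unperturbed bound first and then adds $\|\lambda_e-\lambda(x,u)\|\le C\|e\|$.
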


\begin{proof}
The bound on $J_c$ makes $c$ globally Lipschitz.
Metric nonexpansiveness of $P_D$ and
$P_D(u_*+Dc_*)=u_*$ therefore give
\[
\|\lambda(x,u)-u_*\|
\le C\bigl(\|u-u_*\|+\|x-x_*\|\bigr).
\]
It follows that
\[
\|d(x,u)\|
\le\|\lambda(x,u)-u_*\|+\|u-u_*\|
\le C\sqrt{V_*(x,u)}.
\]

Using the KKT stationarity condition, write
\[
\begin{aligned}
G(x,u)={}&
\nabla f(x)-\nabla f(x_*)
+J_c(x)^\top(\lambda(x,u)-u_*)\\
&+[J_c(x)-J_c(x_*)]^\top u_*.
\end{aligned}
\]
Assumption~\ref{ass:growth} bounds each term by a
constant times $\sqrt{V_*(x,u)}$, proving
\eqref{eq:growth}.

Nonexpansiveness also gives
\[
\|\lambda_e-\lambda(x,u)\|\le C\|e\|.
\]
Combining this inequality with the bound on $J_c$
and \eqref{eq:growth} proves
\eqref{eq:perturbed_growth}.
\end{proof}

Separate the error in the multiplier signal from the
noise in the derivative observations by defining
\[
\begin{aligned}
p_k&\coloneqq\lambda_k-\lambda(x_k,u_k)\in\R^m,\\
\xi_k&\coloneqq
\hat g_k-\nabla f(x_k)
+(\hat J_k-J_c(x_k))^\top\lambda_k
\in\R^n.
\end{aligned}
\]
Thus $p_k$ is the signal perturbation caused by the
constraint estimate, and $\xi_k$ is the derivative noise
in the primal update.
Since $\lambda_k$ is $\Fcal_k$-measurable,
Assumption~\ref{ass:oracle} implies
\begin{equation}
\begin{aligned}
\E_k\xi_k=0,~~
\|p_k\|\le C\|e_k\|,~~
\E_k\|\xi_k\|^2
\le C(1+V_k+\|e_k\|^2).
\end{aligned}
\label{eq:noise}
\end{equation}
For the last estimate, use
$\|\lambda_k\|^2\le C(1+V_k+\|e_k\|^2)$
and bound the squared norm of the sum defining $\xi_k$
by twice the sum of the squared norms.
This argument permits correlation between
$\hat g_k$ and $\hat J_k$.

\begin{lemma}[Coupled drift estimates]
\label{lem:drifts}
Under the assumptions of Theorem~\ref{thm:main},
let $\delta_k\in(0,1]$ be any deterministic auxiliary
parameter.
For every $k$ such that $\alpha_k\le1$,
\begin{equation}
\begin{aligned}
\E_k V_{k+1}\le{}
(1+C\alpha_k^2+C\alpha_k\delta_k)V_k
-\alpha_k\Dcal_*(z_k)
+C\frac{\alpha_k}{\delta_k}\|e_k\|^2
+C\alpha_k^2,
\end{aligned}
\label{eq:v_drift}
\end{equation}
and
\begin{equation}
\begin{aligned}
\E_k\|e_{k+1}\|^2\le{}&
\left(1-\gamma_k
+C\frac{\alpha_k^2}{\gamma_k}\right)\|e_k\|^2\\
&+C\frac{\alpha_k^2}{\gamma_k}(1+V_k)
+C\gamma_k^2.
\end{aligned}
\label{eq:e_drift}
\end{equation}
The constants can be chosen independently of
$\delta_k$ and $k$.
\end{lemma}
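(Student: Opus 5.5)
For the energy drift \eqref{eq:v_drift}, I would expand $V_{k+1}$ exactly. Write
\[
x_{k+1}-x_*=x_k-x_*-\alpha_k\bigl(G(z_k)+J_c(x_k)^\top p_k+\xi_k\bigr),
\qquad
u_{k+1}-u_*=u_k-u_*+\kappa\alpha_k\bigl(d(z_k)+p_k\bigr),
\]
where $\lambda_k=\lambda(x_k,u_k)+p_k$. Expanding the squares in $V_*$ gives a first-order term
\[
\alpha_k\bigl(-\langle x_k-x_*,G(z_k)\rangle+\langle u_k-u_*,D^{-1}d(z_k)\rangle\bigr).
\]
By the computation in the proof of Lemma~\ref{lem:energy}, which is purely algebraic and uses \eqref{eq:supply}, this term equals $-\alpha_k\Dcal_*(z_k)$.

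The remaining first-order terms are the cross terms
\[
-\alpha_k\langle x_k-x_*,J_c(x_k)^\top p_k\rangle
+\alpha_k\langle u_k-u_*,D^{-1}p_k\rangle,
\]
together with $-\alpha_k\langle x_k-x_*,\xi_k\rangle$. The $\xi_k$ term has zero conditional mean by \eqref{eq:noise}. For the cross terms, I would use $\|J_c\|\le M_c$ and $\|p_k\|\le C\|e_k\|$, followed by Young's inequality $ab\le \delta_k a^2/2+b^2/(2\delta_k)$. This bounds them by $C\alpha_k\delta_kV_k+C(\alpha_k/\delta_k)\|e_k\|^2$.

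The second-order terms are $\alpha_k^2$ times $\|G+J_c^\top p_k+\xi_k\|^2$ and $\|d+p_k\|^2$. Taking $\E_k$ and using \eqref{eq:growth} and \eqref{eq:noise}, they are bounded by $C\alpha_k^2(1+V_k+\|e_k\|^2)$. The $\alpha_k^2\|e_k\|^2$ piece is absorbed into $C(\alpha_k/\delta_k)\|e_k\|^2$, since $\alpha_k\le1$ and $\delta_k\le1$ give $\alpha_k^2\le\alpha_k/\delta_k$. This yields \eqref{eq:v_drift}.

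For the tracking drift \eqref{eq:e_drift}, I would write
\[
e_{k+1}=(1-\gamma_k)e_k-(1-\gamma_k)\Delta_k+\gamma_k\varepsilon_{k+1},
\qquad
\Delta_k\coloneqq c(x_{k+1})-c(x_k).
\]
The quantity $\Delta_k$ is $\Fcal_k^+$-measurable, and $\E[\varepsilon_{k+1}\mid\Fcal_k^+]=0$. Conditioning first on $\Fcal_k^+$ therefore removes the cross term with the noise, leaving
\[
\E_k\|(1-\gamma_k)(e_k-\Delta_k)\|^2+\gamma_k^2\sigma_c^2/B_c .
\]
The key step is handling the cross term $\langle e_k,\Delta_k\rangle$. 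I would apply Young's inequality with weight $\gamma_k$:
\[
\|e_k-\Delta_k\|^2\le(1+\gamma_k)\|e_k\|^2+(1+1/\gamma_k)\|\Delta_k\|^2 .
\]
Since $(1-\gamma_k)^2(1+\gamma_k)\le1-\gamma_k$, the coefficient on $\|e_k\|^2$ contracts as required. It remains to bound $\|\Delta_k\|$. Global Lipschitzness of $c$ gives $\|\Delta_k\|\le M_c\alpha_k\|\hat g_k+\hat J_k^\top\lambda_k\|$. Taking $\E_k$ and using \eqref{eq:perturbed_growth} with $e=e_k$ together with \eqref{eq:noise}, I get
\[
\E_k\|\Delta_k\|^2\le C\alpha_k^2(1+V_k+\|e_k\|^2).
\]
Multiplying by $(1+1/\gamma_k)\le2/\gamma_k$ gives the $C(\alpha_k^2/\gamma_k)$ terms in \eqref{eq:e_drift}.

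The main obstacle is this cross term between $e_k$ and the drift $\Delta_k$ of the tracked quantity. A naive bound would produce a coefficient like $\alpha_k\|e_k\|\sqrt{V_k}$ and fail to yield the contraction. The $\gamma_k$-weighted Young inequality is what makes the argument work, at the price of the $\alpha_k^2/\gamma_k$ factor that the step-size separation later controls. A secondary point is to check that the constants are independent of $\delta_k$ and $k$. They depend only on $M_c$, $L_f$, $L_J$, $D$, $\kappa$, the noise bounds, and the reference point, so this holds.
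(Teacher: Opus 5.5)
Your proposal is correct and follows essentially the same route as the paper. You expand $V_*$ exactly, identify the ideal linear term as $-\alpha_k\Dcal_*(z_k)$, and apply a $\delta_k$-weighted Young inequality to the $p_k$ cross terms; for the tracking error you condition on $\Fcal_k^+$ and use a $\gamma_k$-weighted Young inequality with weight $\gamma_k$ and $(1-\gamma_k)^2(1+\gamma_k)\le1-\gamma_k$, where the paper uses weight $\gamma_k/2$ and $(1-\gamma_k)^2(1+\gamma_k/2)\le1-\gamma_k$, and both versions yield \eqref{eq:e_drift}.
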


\begin{proof}
Relative to the ideal vector field $F(z_k)$, the
conditional mean perturbation is
$(-J_c(x_k)^\top p_k,\kappa p_k)$,
and the centered perturbation is $(-\xi_k,0)$.
The quadratic expansion of $V_*$ has ideal linear term
$-\alpha_k\Dcal_*(z_k)$ by
Lemma~\ref{lem:energy}.
The remaining mean linear term is
\[
\alpha_k\left[
-\langle x_k-x_*,J_c(x_k)^\top p_k\rangle
+\langle u_k-u_*,D^{-1}p_k\rangle
\right].
\]
Its absolute value is at most
$C\alpha_k\sqrt{V_k}\|e_k\|$.
The centered linear term has zero conditional expectation.
Lemma~\ref{lem:growth} and \eqref{eq:noise} bound the
quadratic terms, giving
\[
\begin{aligned}
\E_k V_{k+1}\le{}&
V_k-\alpha_k\Dcal_*(z_k)
+C\alpha_k\sqrt{V_k}\|e_k\|\\
&+C\alpha_k^2(1+V_k+\|e_k\|^2).
\end{aligned}
\]
Apply Young's inequality in the form
\[
2\sqrt{V_k}\|e_k\|
\le\delta_k V_k+\delta_k^{-1}\|e_k\|^2.
\]
Since $\alpha_k\le1$ and $\delta_k\le1$,
$\alpha_k^2\le\alpha_k/\delta_k$.
This proves \eqref{eq:v_drift}.

For the tracking error, \eqref{eq:algorithm} gives the
exact recursion
\[
e_{k+1}
=(1-\gamma_k)
\bigl[e_k+c(x_k)-c(x_{k+1})\bigr]
+\gamma_k\varepsilon_{k+1}.
\]
Define
$h_k\coloneqq c(x_k)-c(x_{k+1})\in\R^m$,
the change in the population constraint value during
the primal update.
Global Lipschitz continuity of $c$, together with
Lemma~\ref{lem:growth} and \eqref{eq:noise}, gives
\[
\E_k\|h_k\|^2
\le C\alpha_k^2(1+V_k+\|e_k\|^2).
\]
Both $e_k$ and $h_k$ are $\Fcal_k^+$-measurable.
The conditional mean assumption on
$\varepsilon_{k+1}$ therefore eliminates its cross term,
so
\[
\E_k\|e_{k+1}\|^2
\le
(1-\gamma_k)^2\E_k\|e_k+h_k\|^2
+C\gamma_k^2.
\]
Using
\[
\|e_k+h_k\|^2
\le
(1+\gamma_k/2)\|e_k\|^2
+(1+2/\gamma_k)\|h_k\|^2
\]
and
\[
(1-\gamma)^2(1+\gamma/2)\le1-\gamma
~~(0<\gamma\le1)
\]
proves \eqref{eq:e_drift}.
\end{proof}

The two estimates describe the dependence between
optimization and estimation.
The tracking error contributes to the energy drift,
while movement of the primal variable contributes to
the tracking error.
Neither estimate requires bounded iterates.

\subsection{A joint energy for stability}

The positive term
$C\alpha_k\|e_k\|^2/\delta_k$
in \eqref{eq:v_drift} can be controlled by the
contraction term in \eqref{eq:e_drift}.
To match their coefficients, define
\[
\begin{aligned}
r&\coloneqq\frac14-\frac{\vartheta}{2},
&
\delta_k&\coloneqq(1+k/\tau_0)^{-r},\\
w_k&\coloneqq
L_0\frac{\alpha_k}{\delta_k\gamma_k},
&
W_k&\coloneqq V_k+w_k\|e_k\|^2,
\end{aligned}
\]
where $\vartheta$ and $\tau_0$ are the parameters in
Theorem~\ref{thm:main}.
The exponent $r$ determines the auxiliary sequence
$\delta_k$, and $L_0>0$ is a proof constant chosen below.
The scalar $W_k$ is the joint energy.
Its weight satisfies
\[
w_k=
L_0\frac{\alpha_0}{\gamma_0}
(1+k/\tau_0)^{-\vartheta/2}.
\]
In particular, $w_k$ is positive and nonincreasing,
and $w_{k+1}/w_k\to1$.

\begin{lemma}[Joint energy estimate]
\label{lem:joint}
There exist constants $L_0,c_0>0$ and a deterministic
integer $k_0\ge0$ such that, for all $k\ge k_0$,
\begin{equation}
\begin{aligned}
\E_k W_{k+1}\le{}&
(1+\eta_k)W_k-\alpha_k\Dcal_*(z_k)\\
&-c_0\frac{\alpha_k}{\delta_k}\|e_k\|^2+q_k,
\end{aligned}
\label{eq:joint_drift}
\end{equation}
where the deterministic nonnegative sequences
$\eta_k$ and $q_k$ are
\begin{equation}
\begin{aligned}
\eta_k&\coloneqq
C\left(
\alpha_k^2+\alpha_k\delta_k
+\frac{\alpha_k^3}{\delta_k\gamma_k^2}
\right),\\
q_k&\coloneqq
C\left(
\alpha_k^2
+\frac{\alpha_k^3}{\delta_k\gamma_k^2}
+\frac{\alpha_k\gamma_k}{\delta_k}
\right).
\end{aligned}
\label{eq:coefficients}
\end{equation}
They satisfy
$\sum_{k=0}^{\infty}\eta_k<\infty$ and
$\sum_{k=0}^{\infty}q_k<\infty$.
\end{lemma}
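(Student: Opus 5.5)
The plan is to add the two drift inequalities of Lemma~\ref{lem:drifts} with the weight $w_{k+1}$ on the tracking error. Then $L_0$ is chosen so that the contraction of the tracking error absorbs the harmful cross term $C\alpha_k\|e_k\|^2/\delta_k$ in \eqref{eq:v_drift}. Let $k_1$ be such that $\alpha_k\le1$ for $k\ge k_1$; it exists because $\alpha_k\to0$. For $k\ge k_1$ I will write
\[
\E_k W_{k+1}=\E_k V_{k+1}+w_{k+1}\E_k\|e_{k+1}\|^2
\le \E_k V_{k+1}+w_k\E_k\|e_{k+1}\|^2 ,
\]
using that $w_k$ is deterministic and nonincreasing. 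Then I substitute \eqref{eq:v_drift} with the given $\delta_k$ and \eqref{eq:e_drift}. Let $C_1$ and $C_2$ denote the constants in \eqref{eq:v_drift} and \eqref{eq:e_drift}; both are independent of $\delta_k$ and $k$, and this independence is what lets $L_0$ be fixed once.

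The key bookkeeping uses $w_k\gamma_k=L_0\alpha_k/\delta_k$. The contraction term $-w_k\gamma_k\|e_k\|^2$ becomes $-L_0(\alpha_k/\delta_k)\|e_k\|^2$, and it must dominate three contributions:
\begin{itemize}
\item $C_1(\alpha_k/\delta_k)\|e_k\|^2$ from \eqref{eq:v_drift};
\item $w_kC_2(\alpha_k^2/\gamma_k)\|e_k\|^2$, which equals $L_0C_2(\alpha_k/\delta_k)(\alpha_k^2/\gamma_k^2)\|e_k\|^2$;
\item nothing else involving $\|e_k\|^2$.
\end{itemize}
Since $\alpha_k/\gamma_k\to0$, I choose $k_0\ge k_1$ so that $C_2\alpha_k^2/\gamma_k^2\le1/2$ for $k\ge k_0$. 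Taking $L_0\coloneqq2C_1+2$ then yields the term $-c_0(\alpha_k/\delta_k)\|e_k\|^2$ with $c_0=L_0/2-C_1=1$. The remaining terms are handled as follows:
\begin{itemize}
\item $w_kC_2(\alpha_k^2/\gamma_k)(1+V_k)=L_0C_2\,\alpha_k^3/(\delta_k\gamma_k^2)\,(1+V_k)$. The $V_k$ part is bounded by a multiple of $W_k$ and goes into $\eta_k$; the constant part goes into $q_k$.
\item $w_kC_2\gamma_k^2=L_0C_2\,\alpha_k\gamma_k/\delta_k$ goes into $q_k$.
\item $(C_1\alpha_k^2+C_1\alpha_k\delta_k)V_k$ is bounded using $V_k\le W_k$ and goes into $\eta_k$.
\item $C_1\alpha_k^2$ goes into $q_k$.
\end{itemize}
The dissipation term $-\alpha_k\Dcal_*(z_k)$ passes through unchanged. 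This gives \eqref{eq:joint_drift} with the coefficients \eqref{eq:coefficients}.

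Summability follows by comparing exponents of $(1+k/\tau_0)$, with $a=3/4+\vartheta$, $b=1/2+\vartheta$ and $r=1/4-\vartheta/2$:
\begin{itemize}
\item $\alpha_k^2$ has exponent $-2a<-1$;
\item $\alpha_k\delta_k$ has exponent $-(a+r)=-(1+\vartheta/2)$;
\item $\alpha_k^3/(\delta_k\gamma_k^2)$ has exponent $-(3a-r-2b)=-(1+3\vartheta/2)$;
\item $\alpha_k\gamma_k/\delta_k$ has exponent $-(a+b-r)=-(1+5\vartheta/2)$.
\end{itemize}
All these exponents are below $-1$ because $\vartheta>0$, so the corresponding $p$-series converge. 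The finitely many terms with $k<k_0$ do not affect summability; for those indices, $\eta_k$ and $q_k$ are simply defined by the same formulas.

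I expect the main obstacle to be the coefficient matching: the weight must make $w_k\gamma_k$ exactly comparable to $\alpha_k/\delta_k$, while keeping the feedback $w_k\alpha_k^2/\gamma_k$ both of lower order and summable. The choice $r=1/4-\vartheta/2$ is the delicate point. It is what makes $\alpha_k\delta_k$ summable, which requires $a+r>1$, and simultaneously keeps $\alpha_k^3/(\delta_k\gamma_k^2)$ summable, which requires $3a-2b-r>1$. I would verify these two inequalities carefully first. A minor point is that $\delta_k\le1$ is required in Lemma~\ref{lem:drifts}, and this holds since $r>0$.
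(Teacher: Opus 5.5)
Your proposal is correct and follows essentially the same route as the paper: add \eqref{eq:v_drift} to the weighted tracking drift, use $w_k\gamma_k=L_0\alpha_k/\delta_k$ and $\alpha_k/\gamma_k\to0$ to absorb the cross term for large $L_0$, and check the same four decay exponents. You bound $w_{k+1}\le w_k$ before substituting \eqref{eq:e_drift}, which slightly simplifies the paper's step; the paper instead multiplies by $w_{k+1}$ and then needs $w_{k+1}\ge w_k/2$. The paper also remarks that every iterate has finite second moments, which is not part of the stated inequality but is needed when the lemma is used with the Robbins--Siegmund theorem.
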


\begin{proof}
Multiply \eqref{eq:e_drift} by $w_{k+1}$ and add
\eqref{eq:v_drift}.
Relative to $W_k$, the resulting coefficient of
$\|e_k\|^2$ is at most
\[
(w_{k+1}-w_k)-w_{k+1}\gamma_k
+C\frac{\alpha_k}{\delta_k}
+Cw_{k+1}\frac{\alpha_k^2}{\gamma_k}.
\]
The first term is nonpositive.
For all sufficiently large $k$,
$w_{k+1}\ge w_k/2$.
Since $w_{k+1}\le w_k$, the remaining terms are bounded
above by
\[
\frac{\alpha_k}{\delta_k}
\left[
-\frac{L_0}{2}
+C+CL_0\left(\frac{\alpha_k}{\gamma_k}\right)^2
\right].
\]
Choose $L_0$ sufficiently large.
Because $\alpha_k/\gamma_k\to0$, the expression in
brackets is then bounded above by a negative constant
$-c_0$ for all sufficiently large $k$.

The additional coefficient of $V_k$ is bounded by
\[
C\left(
\alpha_k^2+\alpha_k\delta_k
+\frac{\alpha_k^3}{\delta_k\gamma_k^2}
\right).
\]
The additive terms are bounded by
\[
C\left(
\alpha_k^2
+\frac{\alpha_k^3}{\delta_k\gamma_k^2}
+\frac{\alpha_k\gamma_k}{\delta_k}
\right).
\]
Using $V_k\le W_k$ gives \eqref{eq:joint_drift},
after increasing $k_0$ if necessary to ensure
$\alpha_k\le1$.

The decay exponents associated with these terms are
\[
\begin{aligned}
a+r&=1+\vartheta/2,
&
3a-r-2b&=1+3\vartheta/2,\\
a+b-r&=1+5\vartheta/2,
&
2a-b&=1+\vartheta,\\
2b&=1+2\vartheta,
&
2a&=3/2+2\vartheta,
\end{aligned}
\]
where $a$ and $b$ are defined in
Theorem~\ref{thm:main}.
Every exponent is greater than one.
Hence $\eta_k$ and $q_k$ are summable, as are
$\alpha_k^2/\gamma_k$ and $\gamma_k^2$.

Finally, all states at each finite iteration have finite
second moments.
Indeed, the initial error $e_0=y_0-c(x_0)$ has a finite
second moment because $c$ is globally Lipschitz.
The direction estimates, conditional observation bounds,
and exact tracking recursion then propagate finite
second moments by induction.
Thus any finite prefix preceding $k_0$ is integrable.
\end{proof}

\subsection{From boundedness to convergence of the iterates}

We use the almost supermartingale theorem of
Robbins and Siegmund \citep{robbins1971}.
In the form needed here, let $X_k$ be a nonnegative
integrable process adapted to $\Fcal_k$ and suppose
\[
\E_k X_{k+1}
\le(1+a_k)X_k-b_k+c_k,
\]
where $a_k$, $b_k$, and $c_k$ are nonnegative
$\Fcal_k$-measurable scalar coefficients.
If $\sum_k a_k<\infty$ and $\sum_k c_k<\infty$
almost surely, then $X_k$ has a finite almost sure limit
and $\sum_k b_k<\infty$ almost surely.
The form with random summable coefficients follows
by localization.

The next lemma gives the passage from a perturbed
iteration to the ideal dynamics.
It makes explicit the limiting trajectory argument
used in stochastic approximation
\citep{benaim1999,kushner2003}.

\begin{lemma}[Limiting trajectories]
\label{lem:limit}
Suppose Assumption~\ref{ass:convex} holds.
On a fixed sample path, let
$z_k\in\R^n\times\Ccal$ be a bounded sequence satisfying
\begin{equation}
z_{k+1}
=z_k+\alpha_kF(z_k)
+r_k+\Delta M_k+\alpha_k b_k.
\label{eq:euler}
\end{equation}
Here $\alpha_k>0$ satisfies
$\alpha_k\to0$ and $\sum_k\alpha_k=\infty$.
The vectors $r_k,\Delta M_k,b_k\in\R^{n+m}$ satisfy
\[
\sum_k\|r_k\|<\infty,
~~
\sum_k\Delta M_k\text{ converges},
~~
b_k\to0.
\]
Thus $r_k$ is an absolutely summable perturbation,
$\Delta M_k$ is an increment whose vector series
converges, and $b_k$ is a vanishing perturbation of
the vector field.

If $V_*(z_k)$ has a limit for one KKT reference point,
every accumulation point of $z_k$ belongs to $\Zstar$.
If the distance energies have limits for every reference
point in a countable dense subset of $\Zstar$,
then $z_k$ converges to a single KKT point.
\end{lemma}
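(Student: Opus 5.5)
We follow the ODE method of stochastic approximation: interpolate, use asymptotic pseudotrajectory arguments, then invoke the invariance structure from the proof of Theorem~\ref{thm:ideal}.

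\textbf{Interpolation and pseudotrajectory.} Set $t_k\coloneqq\sum_{j<k}\alpha_j$ and let $\bar z(t)$ be the piecewise linear interpolation with $\bar z(t_k)=z_k$. Since $z_k$ is bounded and lies in $\Ccal$ on the $u$-block (closed), we may take a compact set $K$ containing all $z_k$ and a convex neighborhood on which $F$ is Lipschitz with constant $L_K$. For fixed $T>0$ we compare $\bar z(t_k+\cdot)$ on $[0,T]$ with the solution $\Phi_t(z_k)$ of \eqref{eq:ode}. Summing \eqref{eq:euler} from $k$ to $m(k,t)$ gives
\[
z_{m}-z_k-\sum_{j=k}^{m-1}\alpha_jF(z_j)
=\sum_{j=k}^{m-1}\bigl(r_j+\Delta M_j+\alpha_jb_j\bigr),
\]
whose supremum over $t_m-t_k\le T$ tends to zero. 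This uses the Cauchy tails of $\sum\|r_j\|$ and $\sum\Delta M_j$, together with $\sup_{j\ge k}\|b_j\|\,T\to0$. The discretization error is $O(\sup_{j\ge k}\alpha_j)$ because $F$ is bounded on $K$ and $\alpha_k\to0$. Gronwall then gives $\sup_{t\in[0,T]}\|\bar z(t_k+t)-\Phi_t(z_k)\|\to0$. One subtlety is that $\Phi_t(z_k)$ must remain in a compact set; this holds because the ideal flow keeps $V_*$ nonincreasing (Lemma~\ref{lem:energy}), so it stays in a bounded sublevel set. We take $K$ large enough to contain this set as well.

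\textbf{Accumulation points are KKT.} Suppose $V_*(z_k)\to v$ and $z_{k_j}\to\bar z$. The pseudotrajectory property, together with continuity of $\Phi_t$ in its initial condition, gives $z$ at times $\approx t_{k_j}+t$ converging to $\Phi_t(\bar z)$ for each $t\in[0,T]$. Hence $V_*(\Phi_t(\bar z))=v$ for all $t\ge0$, since $V_*$ is continuous and its limit along the full sequence exists. Lemma~\ref{lem:energy} then yields $\Dcal_*(\Phi_t(\bar z))=0$ for all $t$, so the whole forward trajectory through $\bar z$ lies in the zero-dissipation set. The argument in the proof of Theorem~\ref{thm:ideal} applies: $d=0$ along the trajectory, so $u$ is constant, feasibility and complementarity hold, $f(x)=f(x_*)$, and $\frac{d}{dt}L(x(t),u)=-\|G\|^2$ with $L$ constant forces $G=0$. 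By Proposition~\ref{prop:zeros}, $\bar z\in\Zstar$. The main obstacle is making this step rigorous: we need the limit of $V_*$ along the interpolated path at intermediate times, not only at the grid points. Since $\|z_{k+1}-z_k\|\to0$ and $V_*$ is uniformly continuous on $K$, the interpolated values also converge to $v$, so this is handled.

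\textbf{Single limit.} Let $S\subseteq\Zstar$ be countable and dense, and suppose each distance energy $V_s(z_k)$, $s\in S$, converges. We fix one accumulation point $z^\infty$, which lies in $\Zstar$ by the previous step. Choose $s_i\in S$ with $s_i\to z^\infty$. Since $V_s(z)$ is a weighted squared distance, $|\sqrt{V_{s}(z)}-\sqrt{V_{z^\infty}(z)}|\le C\|s-z^\infty\|$. Therefore
\[
\limsup_k\sqrt{V_{z^\infty}(z_k)}\le\lim_k\sqrt{V_{s_i}(z_k)}+C\|s_i-z^\infty\|\le 2C\|s_i-z^\infty\|+\liminf_k\sqrt{V_{z^\infty}(z_k)}.
\]
Along the subsequence converging to $z^\infty$ the liminf is $0$. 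Letting $i\to\infty$ gives $V_{z^\infty}(z_k)\to0$, so $z_k\to z^\infty$.
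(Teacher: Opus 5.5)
Your proposal is correct and follows essentially the same route as the paper: linear interpolation on the $t_k$ time scale, a Gronwall comparison with the ideal flow kept in a compact sublevel set of $V_*$, constant energy along the limiting trajectory fed into the invariant-set argument of Theorem~\ref{thm:ideal}, and approximation by the countable dense subset to obtain a single limit. The differences are cosmetic: your last step uses the triangle inequality for $\sqrt{V_s}$ rather than the paper's uniform bound $\sup_k|V_{q_j}(z_k)-V_q(z_k)|\to0$, and the discretization error is really controlled by $\|z_{j+1}-z_j\|\to0$, which you note later, rather than being $O(\sup_{j\ge k}\alpha_j)$, since the increments $r_j+\Delta M_j$ need not be of order $\alpha_j$.
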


\begin{proof}
Define the cumulative optimization times by
\[
t_0\coloneqq0,
~~
t_k\coloneqq\sum_{j=0}^{k-1}\alpha_j
~(k\ge1).
\]
Let $\widetilde z$ be the linear interpolation satisfying
\[
\widetilde z(t)
\coloneqq
z_k+\frac{t-t_k}{\alpha_k}(z_{k+1}-z_k),
~~
t\in[t_k,t_{k+1}].
\]
Since $\Ccal$ is convex, this interpolation remains in
$\R^n\times\Ccal$.

Fix a time window length $H>0$ and consider windows
starting at $t_N$, with $N\to\infty$.
The tails of the absolutely convergent series
$\sum_k r_k$ tend to zero uniformly over subintervals.
The same property holds for partial sums of
$\sum_k\Delta M_k$ because that vector series converges.
The cumulative contribution of $\alpha_k b_k$ over
such a window is bounded by
\[
\left(H+\sup_{j\ge N}\alpha_j\right)
\sup_{j\ge N}\|b_j\|\longrightarrow0.
\]
Partial interpolation intervals satisfy the same bounds.

Boundedness of $z_k$ and continuity of $F$ make
$F(z_k)$ bounded.
Equation~\eqref{eq:euler} therefore gives
$\|z_{k+1}-z_k\|\to0$.
Consequently, replacing the piecewise constant states
$z_k$ by $\widetilde z(t)$ in the integral of $F$
introduces an error tending uniformly to zero on each
window of length $H$.

Let $\varphi_t(z)$ denote the ideal solution at time $t$
with initial state $z$, whose global existence is supplied
by Theorem~\ref{thm:ideal}.
The energy bound in that theorem places the ideal
solutions starting from the bounded sequence $z_N$
in a common compact set.
On a compact neighborhood containing these solutions
and the interpolation, $F$ is Lipschitz.
Gronwall's inequality then gives
\[
\sup_{0\le t\le H}
\|\widetilde z(t_N+t)-\varphi_t(z_N)\|
\longrightarrow0.
\]

Now let $\bar z$ be an accumulation point and choose
indices $N_j\to\infty$ such that $z_{N_j}\to\bar z$.
Continuous dependence of the ideal solutions on their
initial states implies
\[
\widetilde z(t_{N_j}+t)\longrightarrow\varphi_t(\bar z)
\]
uniformly on every fixed finite time interval.
If $V_*(z_k)$ converges, the vanishing interpolation
increments imply that the limiting trajectory
$\varphi_t(\bar z)$ has constant energy.
By \eqref{eq:energy_identity}, it remains in the zero
dissipation set.
The invariant set argument in
Theorem~\ref{thm:ideal} therefore gives
$\bar z\in\Zstar$.

For the second assertion, let $\mathcal Q$ be the
countable dense subset of $\Zstar$ appearing in the
hypothesis.
For any reference point $q=(x_q,u_q)\in\Zstar$, write
\[
V_q(x,u)\coloneqq
\frac12\|x-x_q\|^2
+\frac1{2\kappa}\|u-u_q\|_{D^{-1}}^2.
\]
Choose $q_j\in\mathcal Q$ with $q_j\to q$.
Since $z_k$ is bounded, the quadratic formula gives
\[
\sup_k|V_{q_j}(z_k)-V_q(z_k)|\longrightarrow0.
\]
Each sequence $V_{q_j}(z_k)$ has a limit, so this
uniform approximation implies that $V_q(z_k)$ also
has a limit.

Choose a KKT accumulation point $\bar z$ as the
reference point $q$.
The energy $V_{\bar z}(z_k)$ tends to zero along an
accumulation subsequence.
Its full limit is therefore zero, and $z_k\to\bar z$.
\end{proof}

\begin{proof}[Proof of Theorem~\ref{thm:main}]
Apply the almost supermartingale theorem to
\eqref{eq:joint_drift}.
It gives a finite almost sure limit of $W_k$ and
\begin{equation}
\sum_k\alpha_k\Dcal_*(z_k)<\infty,
~~
\sum_k\frac{\alpha_k}{\delta_k}\|e_k\|^2<\infty
~\text{almost surely}.
\label{eq:sums}
\end{equation}
Since $V_k\le W_k$, the primal and dual states are
almost surely bounded.
Taking expectations in \eqref{eq:joint_drift},
dropping its negative terms, and using
$\sum_k\eta_k<\infty$ and $\sum_kq_k<\infty$ also gives
\[
\sup_k\E W_k<\infty,
~~
\sup_k\E V_k<\infty.
\]

To control the unweighted tracking error, return to
\eqref{eq:e_drift}.
The sequence $\alpha_k^2/\gamma_k$ is summable.
Moreover,
\[
\sum_k\left[
C\frac{\alpha_k^2}{\gamma_k}(1+V_k)
+C\gamma_k^2
\right]<\infty
~\text{almost surely},
\]
because $V_k$ is bounded on each such sample path.
A second application of the almost supermartingale
theorem shows that $\|e_k\|^2$ has a finite limit and
\[
\sum_k\gamma_k\|e_k\|^2<\infty
~\text{almost surely}.
\]
Since $\sum_k\gamma_k=\infty$, the limit must be zero.
It follows that $y_k=c(x_k)+e_k$ is almost surely
bounded.
Also,
\[
V_k=W_k-w_k\|e_k\|^2
\]
has a finite almost sure limit.

For convergence of the tracking error in mean square,
take expectations in \eqref{eq:e_drift}.
Because $\alpha_k/\gamma_k\to0$ and
$\sup_k\E V_k<\infty$, for all sufficiently large $k$,
\[
\E\|e_{k+1}\|^2
\le
(1-\gamma_k/2)\E\|e_k\|^2
+C\frac{\alpha_k^2}{\gamma_k}
+C\gamma_k^2.
\]
The additive term divided by $\gamma_k$ tends to zero:
\[
C\frac{\alpha_k^2}{\gamma_k^2}+C\gamma_k\to0.
\]
For any tolerance $\epsilon>0$, it is therefore
eventually at most $\epsilon\gamma_k/2$.
Subtracting $\epsilon$ from the preceding recursion
and iterating the contraction gives
\[
\limsup_{k\to\infty}\E\|e_k\|^2\le\epsilon,
\]
because $\sum_k\gamma_k=\infty$.
As $\epsilon$ is arbitrary,
$\E\|e_k\|^2\to0$.
This also gives a uniform bound on these second moments.
Together with $\sup_k\E V_k<\infty$, it proves
\eqref{eq:moments}.

Next, Cauchy--Schwarz and \eqref{eq:sums} yield
\[
\sum_k\alpha_k\|e_k\|
\le
\left(\sum_k\alpha_k\delta_k\right)^{1/2}
\left(
\sum_k\frac{\alpha_k}{\delta_k}\|e_k\|^2
\right)^{1/2}
<\infty
~\text{almost surely}.
\]
The signal perturbations
\[
r_k\coloneqq
\alpha_k
\begin{pmatrix}
-J_c(x_k)^\top p_k\\
\kappa p_k
\end{pmatrix}
\]
are therefore absolutely summable, by the bound on
$J_c$ and \eqref{eq:noise}.

For the derivative noise, \eqref{eq:noise},
\eqref{eq:moments}, and $\sum_k\alpha_k^2<\infty$
give
\[
\sum_k\E\|\alpha_k\xi_k\|^2<\infty.
\]
Since $\E_k\xi_k=0$, the partial sums of
$\sum_k\alpha_k\xi_k$ form a martingale bounded in
mean square.
The martingale convergence theorem therefore gives
almost sure and mean square convergence of this series
\citep{kushner2003}.
Thus the iteration has the form \eqref{eq:euler}, with
the vectors $r_k$ defined above,
\[
\Delta M_k\coloneqq
\begin{pmatrix}
-\alpha_k\xi_k\\
0
\end{pmatrix},
~~
b_k\coloneqq0.
\]
Lemma~\ref{lem:limit} shows that every accumulation
point is a KKT point.

For each other fixed KKT reference point, repeat the
joint energy construction, allowing the proof constants
to depend on that point.
Its distance energy also has a finite almost sure limit.
The set $\Zstar$, as a subset of a finite dimensional
Euclidean space, has a countable dense subset.
Taking the intersection of the corresponding probability
one events allows the second part of
Lemma~\ref{lem:limit} to be applied on a single event.
Hence $(x_k,u_k)$ converges almost surely to one random
point $(x^\infty,u^\infty)\in\Zstar$.

Finally, $e_k\to0$ and continuity of the signal maps give
\[
\lambda_k
=P_D(u_k+D(c(x_k)+e_k))
\longrightarrow
P_D(u^\infty+Dc(x^\infty))
=u^\infty.
\]
Continuity and Proposition~\ref{prop:zeros} also give
$\mathcal R(x_k,u_k)\to0$ almost surely.
\end{proof}

\subsection{Accumulated dissipation}

The joint energy estimate also controls the accumulated
dissipation of the ideal vector field along the stochastic
iterates.
For a positive integer $T$, define the cumulative
optimization step
\[
A_T\coloneqq\sum_{k=0}^{T-1}\alpha_k.
\]

\begin{corollary}[Average dissipation]
\label{cor:rate}
Under the assumptions of Theorem~\ref{thm:main},
for each fixed KKT reference point there is a finite
constant $C_*$, independent of $T$, such that
\begin{equation}
\frac{1}{A_T}
\sum_{k=0}^{T-1}
\alpha_k\E\Dcal_*(z_k)
\le\frac{C_*}{A_T}
=O(T^{-1/4+\vartheta}).
\label{eq:rate}
\end{equation}
\end{corollary}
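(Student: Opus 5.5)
The plan is to take total expectations in the joint energy estimate of Lemma~\ref{lem:joint} and telescope. Fix the KKT reference point and the constants $L_0,c_0,k_0$ from that lemma. For $k\ge k_0$, taking expectations in \eqref{eq:joint_drift} and discarding the nonpositive tracking term $-c_0(\alpha_k/\delta_k)\E\|e_k\|^2$ gives
\[
\alpha_k\E\Dcal_*(z_k)\le(1+\eta_k)\E W_k-\E W_{k+1}+q_k .
\]
This is legitimate because every $W_k$ is integrable, by the finite-prefix argument at the end of the proof of Lemma~\ref{lem:joint}. Summing from $k_0$ to $T-1$ telescopes to
\[
\sum_{k=k_0}^{T-1}\alpha_k\E\Dcal_*(z_k)\le\E W_{k_0}+\Bigl(\sup_k\E W_k\Bigr)\sum_k\eta_k+\sum_k q_k ,
\]
where we drop $-\E W_T\le0$. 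The right side is finite and independent of $T$: the proof of Theorem~\ref{thm:main} gives $\sup_k\E W_k<\infty$, and Lemma~\ref{lem:joint} gives summability of $\eta_k$ and $q_k$.

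The finitely many terms with $k<k_0$ are handled separately. Since $k_0$ is deterministic, it suffices to show $\E\Dcal_*(z_k)<\infty$ for each such $k$. Each term of \eqref{eq:dissipation} grows at most quadratically in $(x-x_*,u-u_*)$ under Assumption~\ref{ass:growth}:
\begin{itemize}
\item $B_f$ and $Q_*$ are at most $C\|x-x_*\|^2$ by the Lipschitz gradients;
\item $\|B_c(x_*,x)\|\le C\|x-x_*\|^2$ by the Lipschitz Jacobian;
\item $\|\lambda\|\le C(1+\sqrt{V_*})$, which with the previous bound controls $\langle\lambda,B_c(x_*,x)\rangle$;
\item $\|d\|^2\le CV_*$ by Lemma~\ref{lem:growth};
\item $\mathcal N_*$ is bounded by Cauchy--Schwarz in terms of these same quantities.
\end{itemize}
Hence $\Dcal_*(z)\le C(1+V_*(z)+V_*(z)^{3/2})$. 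I expect this to be the main obstacle, because the $\langle\lambda,B_c\rangle$ term is cubic while only second moments are known. I would try two fixes. The first is to avoid the cubic term by bounding $\Dcal_*$ directly through the first-order expansion: from the proof of Lemma~\ref{lem:energy}, $\Dcal_*$ equals $\langle x-x_*,G\rangle-\langle\lambda-u_*,c(x)-c_*\rangle+\|d\|_{D^{-1}}^2+\mathcal N_*$, and each of these pieces is bounded by $CV_*$ using \eqref{eq:growth} and Lipschitz continuity of $c$ and $\lambda$. This gives $\Dcal_*\le CV_*$ and hence finite expectation from $\E V_k<\infty$. The second, a fallback, is to note that the joint energy drift can be taken to hold from $k=0$ by choosing $\tau_0$ or $L_0$ appropriately. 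I would commit to the first route, since it uses only quadratic growth.

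Combining both pieces yields $\sum_{k=0}^{T-1}\alpha_k\E\Dcal_*(z_k)\le C_*$ with $C_*$ independent of $T$. It remains to bound $A_T$ from below. Since $\alpha_k=\alpha_0(1+k/\tau_0)^{-a}$ with $a=3/4+\vartheta<1$, comparison with an integral gives
\[
A_T\ge\alpha_0\int_0^T(1+s/\tau_0)^{-a}\,ds=\frac{\alpha_0\tau_0}{1-a}\Bigl[(1+T/\tau_0)^{1-a}-1\Bigr]\ge cT^{1/4-\vartheta}
\]
for large $T$, with $c>0$. For small $T$ we use $A_T\ge\alpha_0(1+T/\tau_0)^{-a}T$, enlarging the constant as needed. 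Dividing by $A_T$ then gives \eqref{eq:rate} with the rate $O(T^{-1/4+\vartheta})$.
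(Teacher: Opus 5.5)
Your proposal is correct and is essentially the paper's proof: it takes expectations in \eqref{eq:joint_drift} for $k\ge k_0$, uses the prefix bound $0\le\Dcal_*\le CV_*$, and lower-bounds $A_T\ge cT^{1-a}$ (your expanded expression for $\Dcal_*$ is just $-\langle\nabla V_*(z),F(z)\rangle$ rewritten with \eqref{eq:supply}, which is exactly what the paper bounds). The only difference is cosmetic: you telescope using $\sup_k\E W_k<\infty$ from the proof of Theorem~\ref{thm:main}, while the paper divides by $\Pi_{k+1}=\prod_{j=k_0}^{k}(1+\eta_j)$ to get $\exp(\sum_k\eta_k)(\E W_{k_0}+\sum_kq_k)$ directly; your discarded fallback would not have worked, since $\tau_0$ is fixed by hypothesis and $k_0$ is dictated by the smallness of $\alpha_k/\gamma_k$.
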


\begin{proof}
Let $k_0$ be the index in Lemma~\ref{lem:joint}.
Define the deterministic products
\[
\Pi_{k_0}\coloneqq1,
~~
\Pi_{k+1}\coloneqq
\prod_{j=k_0}^{k}(1+\eta_j)
~(k\ge k_0).
\]
They satisfy
\[
1\le\Pi_{k+1}\le
\exp\left(\sum_{j=k_0}^{\infty}\eta_j\right).
\]
Take expectations in \eqref{eq:joint_drift}, drop the
negative tracking error term, and divide by
$\Pi_{k+1}$.
Summation then gives, for $T>k_0$,
\[
\sum_{k=k_0}^{T-1}\alpha_k\E\Dcal_*(z_k)
\le
\exp\left(\sum_{k=k_0}^{\infty}\eta_k\right)
\left(
\E W_{k_0}
+\sum_{k=k_0}^{\infty}q_k
\right).
\]

The finite prefix is integrable as well.
Indeed, the energy identity gives
$\Dcal_*(z)=-\langle\nabla V_*(z),F(z)\rangle$,
where $\nabla V_*(z)$ is the gradient with respect
to the joint state.
The quadratic form of $V_*$ and
Lemma~\ref{lem:growth} imply
\[
0\le\Dcal_*(z)\le C V_*(z).
\]
The finite second moments already established therefore
allow the prefix $k<k_0$ to be absorbed into a constant
$C_*$.
This proves
$\sum_{k<T}\alpha_k\E\Dcal_*(z_k)\le C_*$.
Finally, the prescribed steps give
$A_T$ of order $T^{1-a}=T^{1/4-\vartheta}$,
which proves \eqref{eq:rate}.
\end{proof}

The bound in \eqref{eq:rate} concerns the energy
dissipation $\Dcal_*$ defined in \eqref{eq:dissipation}.
For a $\mu$-strongly convex objective,
\eqref{eq:strong} yields the additional estimate
\[
\frac1{A_T}\sum_{k=0}^{T-1}\alpha_k
\E\left[
\mu\|x_k-x_*\|^2
+\|d(x_k,u_k)\|_{D^{-1}}^2
\right]
\le\frac{C_*}{A_T}.
\]
The full equilibrium residual $\mathcal R$ converges
almost surely by Theorem~\ref{thm:main};
\eqref{eq:rate} does not provide a finite time rate
for that residual.
Appendix~\ref{app:raw} analyzes the complementary
approach of direct constraint sampling with increasing
batch sizes.

\section{Numerical studies}
\label{sec:numerics}
We examine three aspects of the analysis: the equilibrium bias caused by direct constraint sampling, convergence when the KKT set is nonunique, and the effect of recursive estimation with nonlinear constraint observations. We then compare REC with other stochastic primal--dual methods under a common observation budget. Throughout, we set $D\coloneqq I$ and $\kappa\coloneqq1$, where $I$ is the identity matrix of the appropriate dimension. Residuals are evaluated using the exact population functions through \eqref{eq:residual}. Reported means are computed from the measurements of individual runs.

\subsection{Bias and displacement of the equilibrium}
We first examine the scalar bias function in Proposition~\ref{prop:bias}. At a KKT point, a component of $s_*=u_*+\rho c(x_*)$ is positive for an active inequality with a positive multiplier, zero for a weakly active inequality, and negative for an inactive inequality. The bias as a function of $s$ therefore describes how these three cases respond to constraint noise.

Figure~\ref{fig:bias}a compares the formulas in Corollary~\ref{cor:distributions} with estimates based on $10^5$ independent antithetic pairs. We use symmetric two point noise taking values $\pm1$ and standard Gaussian noise, both with unit variance. For each sampled noise value $\varepsilon$, the antithetic estimate averages $[s+\varepsilon]_+$ and $[s-\varepsilon]_+$ before subtracting $[s]_+$. For the two point distribution, this average equals the theoretical bias exactly. For Gaussian noise, the maximum absolute discrepancy over the evaluated arguments is $2.13\times10^{-4}$. The results agree with the predicted distinction: bounded two point noise produces no bias when $|s|\ge1$, whereas Gaussian noise produces a positive bias at every finite $s$. At $s=0$, the respective biases are $1/2$ and $1/\sqrt{2\pi}$.

We next consider
\[
\min_{x\le0}\ \frac{x^2}{2}-x,
\]
whose unique KKT point is $(0,1)$. With constraint observation noise taking values $\pm\tau$ with equal probability, where $\tau\ge0$, the equilibrium of the directly sampled mean dynamics is
\begin{equation}
\bar x_\tau=\min\{0,(1-\tau)/2\},~~
\bar u_\tau=1-\bar x_\tau,~~
|\bar x_\tau\bar u_\tau|
=\max\{0,(\tau^2-1)/4\}.
\label{eq:phase_prediction}
\end{equation}
For $\tau\le1$, the noise preserves the positive branch at the KKT point, as required by Proposition~\ref{prop:bias}. For $\tau>1$, the calculation in Proposition~\ref{prop:counterexample} gives a displaced equilibrium that violates complementarity.

We compare direct sampling (RAW) with recursive constraint estimation (REC), which follows \eqref{eq:algorithm}. Each noise amplitude uses 32 paths and $20000$ updates. Figure~\ref{fig:bias}b compares the final measurements with \eqref{eq:phase_prediction}. At $\tau=2$, RAW has a mean final primal value of $-0.49665$ and a mean complementarity error of $0.74308$, close to the predicted values $-0.5$ and $0.75$. REC has a mean final primal value of $0.00542$ and a mean residual of $3.51\times10^{-4}$. Thus, in this example, the directly sampled iteration approaches the incorrect equilibrium predicted by the mean dynamics, whereas recursive estimation yields final states close to the original KKT point.

\begin{figure}[tbp]
\centering
\includegraphics[width=\linewidth]{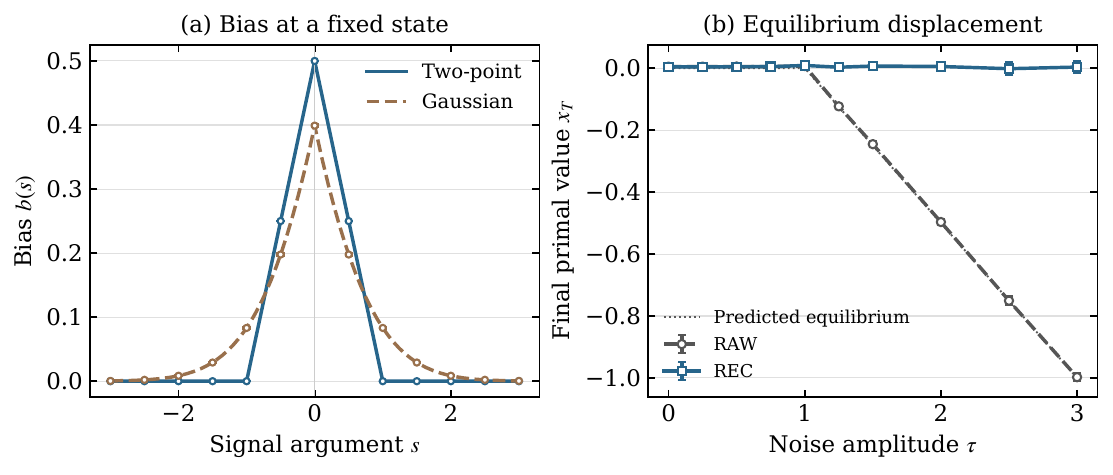}
\caption{Bias and equilibrium displacement. In panel (a), lines show the formulas in Corollary~\ref{cor:distributions}; markers show antithetic estimates, with error bars of $1.96$ estimated standard errors computed across independent pairs. In panel (b), points show final means over 32 paths, with error bars of one sample standard deviation. The dotted line shows the prediction in \eqref{eq:phase_prediction}.}
\label{fig:bias}
\end{figure}

\subsection{Convergence with nonunique solutions and multipliers}
To examine the setting of Theorem~\ref{thm:main} without uniqueness or strong convexity, let $Q\in\R^{24\times24}$ be an orthogonal matrix, write $v\coloneqq Qx$, and let $Q_8$ contain the first eight rows of $Q$. Define
\[
P\coloneqq Q_8^\top Q_8,~~
r\coloneqq Q^\top\tilde r,
\]
where $\tilde r\in\R^{24}$ has entries
\[
\tilde r_1=0.5,~~
\tilde r_2=1,~~
\tilde r_j=0.2~(3\le j\le8),~~
\tilde r_j=0~(9\le j\le24).
\]
Consider
\begin{equation}
f(x)\coloneqq\frac12\|P(x-r)\|^2,~~
c(x)\coloneqq(v_1,2v_1,v_2)^\top,~~
\K\coloneqq\{0\}^2\times\R_+.
\label{eq:nonunique_problem}
\end{equation}
The constraint $c(x)\in-\K$ imposes two redundant equalities and one inequality. The Hessian of the objective has rank eight, and the KKT set is
\begin{equation}
\Zstar=
\left\{
(x,u)\in\R^{24}\times\R^3:
v_1=v_2=0,\ 
v_3=\cdots=v_8=0.2,\ 
u_1+2u_2=0.5,\ 
u_3=1
\right\}.
\label{eq:nonunique_set}
\end{equation}
The coordinates $v_9,\ldots,v_{24}$ and the multiplier coordinate $(2u_1-u_2)/\sqrt5$ are unrestricted. We therefore measure the Euclidean distance to the full KKT set.

REC receives independent uniform noise in the gradient, Jacobian, and constraint observations. We use three initialization groups with different free primal and multiplier coordinates, eight paths per group, and an initial estimation error of norm five. The steps follow Theorem~\ref{thm:main}, with $\vartheta\coloneqq0.05$, $\alpha_0\coloneqq0.25$, $\gamma_0\coloneqq0.5$, and $\tau_0\coloneqq20$. Each path uses $30000$ updates. Appendix~\ref{app:experiments} specifies the initial states and observation bounds.

The mean distance to $\Zstar$ decreases from $1.5166$ initially to $0.0920$, $0.0463$, and $0.0231$ after $1000$, $10000$, and $30000$ updates, respectively. The final mean residual is $1.25\times10^{-3}$, and the mean squared estimation error is $1.13\times10^{-2}$. These measurements are consistent with the convergence of the primal--dual residual and tracking error in Theorem~\ref{thm:main}.

Distance to the KKT set does not measure movement along its free directions. Figure~\ref{fig:nonunique}b therefore also shows the free multiplier coordinate for the first replicate in each initialization group. Its final group means are $-1.265$, $0.763$, and $2.617$. To quantify movement in all free directions, we record the largest displacement from the start of each interval in the vector comprising the 16 free primal coordinates and the free multiplier coordinate. The mean displacements over $[1000,2000]$, $[5000,10000]$, and $[15000,30000]$ are $0.0464$, $0.0310$, and $0.0187$, respectively. Together with the decreasing distance to $\Zstar$, this reduced movement is consistent with convergence to individual KKT points, as asserted by Theorem~\ref{thm:main}, without requiring the runs to share a common limit.

\begin{figure}[tbp]
\centering
\includegraphics[width=\linewidth]{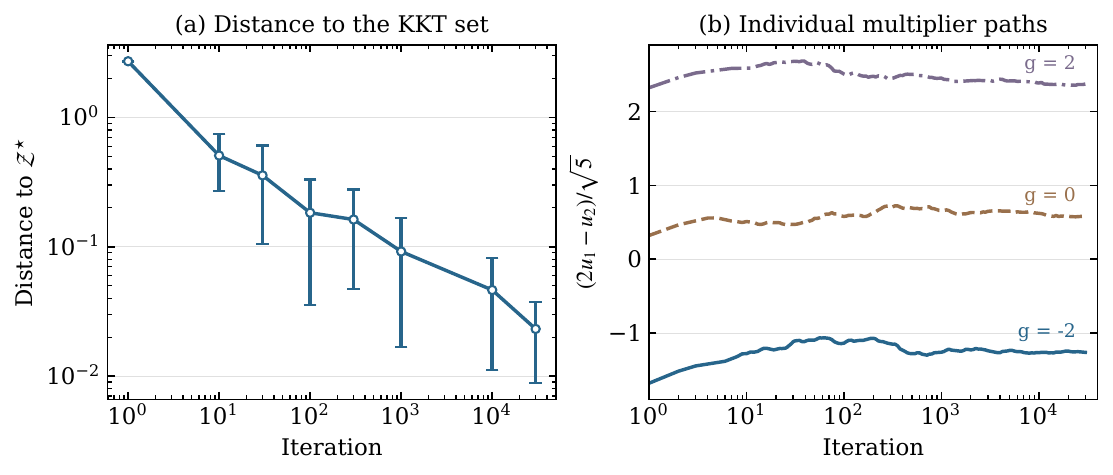}
\caption{REC on the problem in \eqref{eq:nonunique_problem}. Panel (a) shows means and one sample standard deviation over all 24 paths at the indicated checkpoints. Panel (b) shows the free multiplier coordinate $(2u_1-u_2)/\sqrt5$ for the first replicate in each initialization group, without smoothing or averaging. This coordinate is unrestricted in the KKT set \eqref{eq:nonunique_set}.}
\label{fig:nonunique}
\end{figure}

\subsection{Nonlinear constraints and estimation strategies}
We next consider four componentwise inequalities:
\begin{equation}
\begin{aligned}
f(x)&\coloneqq\frac12\|x-r\|^2,\\
c_j(x)&\coloneqq\psi(a_j^\top x)-\beta_j\le0,
~~ j=1,\ldots,4,\\
\psi(t)&\coloneqq\log(1+e^t),~~
\hat c_j(x)\coloneqq c_j(x)+\epsilon_j,
\end{aligned}
\label{eq:nonlinear_problem}
\end{equation}
where $x,r,a_j\in\R^n$, $\beta_j\in\R$, and the observation errors $\epsilon_j$ are independent and uniform on $[-\sigma,\sigma]$. Here $\K=\R_+^4$, and $\sigma$ is the noise amplitude. Gradient observations have independent component errors uniform on $[-0.1,0.1]$, and the constraint Jacobian is exact.

Let $A\in\R^{4\times n}$ have rows $a_j^\top$, each of unit norm. We generate three independent matrices of row rank four in each dimension $n\in\{20,50\}$. For each instance, a known KKT pair is constructed by setting
\[
\begin{gathered}
x_*\coloneqq0.2\mathbf1_n,~~
u_*\coloneqq(0.2,0.3,0.4,0.5)^\top,\\
\beta_j\coloneqq\psi(a_j^\top x_*),~~
r\coloneqq x_*+J_c(x_*)^\top u_*,
\end{gathered}
\]
where $\mathbf1_n$ is the vector of $n$ ones. The objective is strongly convex, and the constraint Jacobian is globally bounded and Lipschitz continuous, so these instances satisfy the regularity conditions used in Theorem~\ref{thm:main}. Since $\psi$ is strictly increasing, the feasible set can also be written as $Ax\le Ax_*$. The experiment therefore tests nonlinear constraint values and derivatives on a polyhedral feasible set.

We use noise amplitudes $\sigma\in\{0.5,2\}$ and eight paths for each instance and noise level, giving 96 runs per method. In addition to the current residual $\mathcal R(x,u)$, we measure the absolute objective error and feasibility error
\[
E_f(x)\coloneqq|f(x)-f(x_*)|,~~
V_c(x)\coloneqq\operatorname{dist}(c(x),-\K)
=\|[c(x)]_+\|.
\]
An observation tuple consists of one stochastic gradient vector and one stochastic constraint vector. Each run receives $20000$ tuples. RAW uses either one tuple per update or a batch of four, averaging both observation channels in the latter case. Thus, RAW with batch size four performs $5000$ updates, while the other configurations perform $20000$ updates. REC evaluates its constraint observation at the new primal point. A constant gain variant of REC, denoted CG, replaces the decreasing estimation gain by $\gamma_k\coloneqq0.1$.

To compare estimation strategies, RAW, CG, and REC use the same optimization step formula as a function of the update index. This comparison fixes the observation budget; the batched method consequently takes fewer optimization steps. CG and REC differ only in their estimation gains. The step parameters and complete results appear in Appendix~\ref{app:protocol} and Table~\ref{tab:estimation}.

Figure~\ref{fig:nonlinear}a reports the current residuals for $\sigma=2$. At the final budget, RAW with batch sizes one and four has mean residuals of $9.22\times10^{-2}$ and $1.44\times10^{-2}$, respectively. Batch averaging reduces the residual, while CG and REC give smaller values of $1.03\times10^{-3}$ and $3.87\times10^{-4}$. The corresponding mean squared tracking errors for CG and REC are $0.308$ and $0.0374$. These results are consistent with the role of tracking error in the convergence analysis: forming the augmented signal from a more accurate constraint estimate reduces the perturbation of the ideal dynamics.

At $\sigma=0.5$, the final mean residuals of CG and REC are much closer, at $6.29\times10^{-5}$ and $5.91\times10^{-5}$. Thus, the benefit of the decreasing estimation gain is more pronounced at the larger noise amplitude within the tested budget.

\subsection{Comparison with stochastic primal--dual methods}
We compare REC with the ordinary projected primal--dual method (PPD), the stochastic linearized proximal method of multipliers (SLPMM) of Zhang et al.\ \citep{zhang2022}, and the adaptive primal--dual stochastic gradient method (APriD) of Yan and Xu \citep{yan2022}. PPD places the decreasing optimization step inside the multiplier projection. SLPMM solves a strongly convex subproblem at each update, while APriD uses adaptive coordinate scaling. For SLPMM and APriD, we evaluate their prescribed averaged primal outputs as well as the residuals of their current primal--dual states.

Each method is calibrated over three step scales on two separate instances, using four paths per noise level and $5000$ observation tuples per run. The selection criterion is the mean of $E_f+V_c$, evaluated at the method's prescribed output. One scale per method is then fixed across all test instances and both noise levels. No test run is used for calibration. This differs from the estimation comparison above, which uses the REC optimization step formula for every configuration. Appendix~\ref{app:protocol} gives the parameter grid, selected scales, output definitions, and implementation checks. The common budget measures stochastic observations; it does not equate the computational cost of individual updates.

Figure~\ref{fig:nonlinear}b and Table~\ref{tab:comparison} report the results after calibration. REC has the smallest mean current residual at both noise levels. At $\sigma=0.5$, its residual is close to that of CG, while PPD has a smaller mean absolute objective error and feasibility error than REC. At $\sigma=2$, REC has a mean current residual of $3.87\times10^{-4}$, compared with $7.23\times10^{-4}$ for PPD and approximately $10^{-2}$ for SLPMM and APriD. This comparison concerns the current primal--dual state, which is the object of the point convergence result in Theorem~\ref{thm:main}.

The primal output measurements give a more mixed comparison. At $\sigma=2$, the prescribed averaged outputs of SLPMM and APriD have mean absolute objective errors of $0.00465$ and $0.00471$, respectively, compared with $0.00531$ for the current REC output. Their mean feasibility errors are $0.00883$ and $0.00841$, compared with $0.00838$ for REC. Thus, the smaller current residual of REC does not imply uniformly smaller primal output errors. Results for averaged primal outputs of every method are provided in Table~\ref{tab:averages}.

RAW also illustrates why feasibility must be considered together with the other measurements. At $\sigma=2$, its feasibility errors are zero at the reported precision, but its objective errors and current residuals remain substantially larger than those of REC. This pattern is consistent with the scalar example: feasibility alone does not identify a KKT point when direct sampling distorts the augmented multiplier signal.

\begin{figure}[tbp]
\centering
\includegraphics[width=\linewidth]{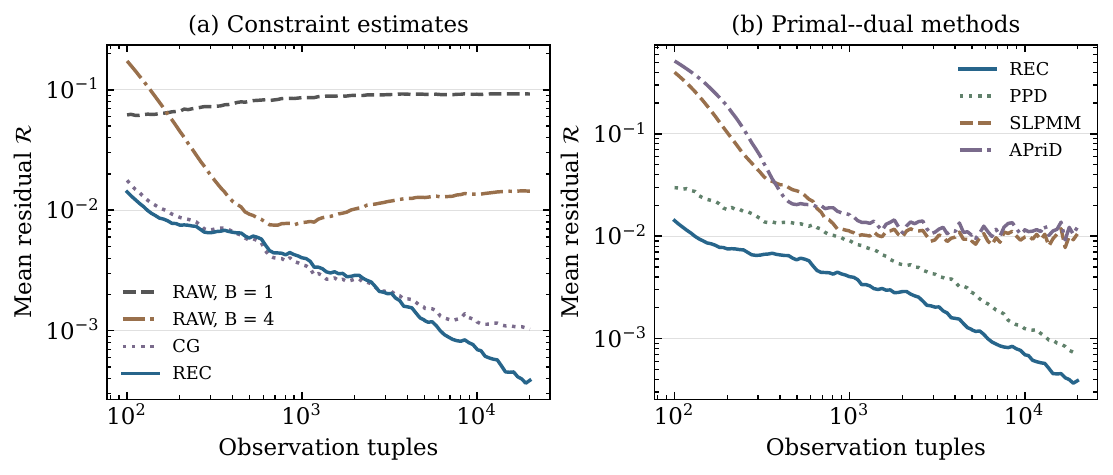}
\caption{Nonlinear study with noise amplitude $\sigma=2$. Curves show mean current residuals over 48 runs on six test instances. Panel (a) uses a common optimization step formula, indexed by update count, to compare constraint estimation strategies. Panel (b) uses the separately calibrated step parameters of each method. The horizontal budget counts observation tuples, each containing one stochastic gradient vector and one stochastic constraint vector; every sample in a batch is counted.}
\label{fig:nonlinear}
\end{figure}

\begin{table}[tbp]
\centering\small
\caption{Nonlinear comparison after $20000$ observation tuples using separately calibrated step parameters. Entries are means $\pm$ sample standard deviations over 48 runs per noise level. The residual $\mathcal R$ is evaluated at the current primal--dual state. The errors $E_f$ and $V_c$ are evaluated at the prescribed averaged primal output for SLPMM and APriD and at the current primal state for the other methods. Column headings indicate the numerical scaling.}
\label{tab:comparison}
\setlength{\tabcolsep}{4pt}
\begin{tabular}{clrrr}
\toprule
$\sigma$ & Method & $10^4\mathcal R$ & $10^2 E_f$ & $10^2 V_c$\\
\midrule
0.5 & RAW ($B=1$) & $14.27\pm6.34$ & $1.29\pm0.53$ & $0.29\pm0.39$\\
 & RAW ($B=4$) & $1.11\pm0.50$ & $0.22\pm0.15$ & $0.25\pm0.20$\\
 & CG & $0.63\pm0.34$ & $0.25\pm0.16$ & $0.55\pm0.21$\\
 & REC & $0.59\pm0.33$ & $0.23\pm0.15$ & $0.52\pm0.21$\\
 & PPD & $0.84\pm0.40$ & $0.17\pm0.13$ & $0.45\pm0.22$\\
 & SLPMM & $12.02\pm5.09$ & $0.35\pm0.14$ & $0.62\pm0.18$\\
 & APriD & $12.41\pm4.96$ & $0.34\pm0.14$ & $0.60\pm0.18$\\
\midrule
2 & RAW ($B=1$) & $932.77\pm215.93$ & $36.87\pm3.24$ & $0.00\pm0.00$\\
 & RAW ($B=4$) & $150.82\pm36.90$ & $9.59\pm1.14$ & $0.00\pm0.00$\\
 & CG & $10.28\pm5.08$ & $1.24\pm0.59$ & $0.29\pm0.38$\\
 & REC & $3.87\pm2.74$ & $0.53\pm0.40$ & $0.84\pm0.56$\\
 & PPD & $7.23\pm5.45$ & $0.55\pm0.41$ & $1.04\pm0.62$\\
 & SLPMM & $105.19\pm54.66$ & $0.46\pm0.35$ & $0.88\pm0.56$\\
 & APriD & $122.82\pm58.41$ & $0.47\pm0.35$ & $0.84\pm0.54$\\
\bottomrule
\end{tabular}
\end{table}

\section{Conclusion}
Constraint observations enter augmented primal--dual dynamics through a nonlinear multiplier signal. We characterized when this signal preserves the KKT equilibria and showed that direct sampling can produce a stable incorrect equilibrium. Recursive estimation yields a vanishing tracking error and, consequently, a vanishing perturbation of the augmented
multiplier signal. For smooth convex cone constraints, a joint energy argument establishes boundedness and almost sure convergence to a single KKT point. The numerical studies examine the predicted bias, convergence with nonunique solutions and multipliers, and the effect of constraint estimation in nonlinear problems. Further analysis could relax the global Jacobian bound and quantify convergence of the full residual over finite horizons.

\appendix
\renewcommand{\thetheorem}{\Alph{section}.\arabic{theorem}}
\renewcommand{\thesubsection}{\Alph{section}.\arabic{subsection}}

\section{Projection and augmented Lagrangian identities}
\label{app:projection}
Let $D\in\R^{m\times m}$ be symmetric positive definite, and let $\Ccal\subseteq\R^m$ be a closed convex cone. For $a,b\in\R^m$, write
\[
\langle a,b\rangle_{D^{-1}}\coloneqq a^\top D^{-1}b,
~~ \|a\|_{D^{-1}}\coloneqq\sqrt{a^\top D^{-1}a},
~~ \|a\|_D\coloneqq\sqrt{a^\top Da}.
\]
The metric projection $P_D(z)$ is the unique minimizer of $\|v-z\|_{D^{-1}}^2/2$ over $v\in\Ccal$. Its variational inequality is
\[
\langle z-P_D(z),v-P_D(z)\rangle_{D^{-1}}\le0
~~\text{for every }v\in\Ccal.
\]
Taking $v=0$ and $v=2P_D(z)$ gives
\[
\langle P_D(z),z-P_D(z)\rangle_{D^{-1}}=0.
\]
Consequently, completing the square yields
\[
\frac12\|P_D(z)\|_{D^{-1}}^2
=\max_{v\in\Ccal}
\left\{\langle v,z\rangle_{D^{-1}}-\frac12\|v\|_{D^{-1}}^2\right\}.
\]
The unique maximizer is $P_D(z)$. The value function is continuously differentiable, with Euclidean gradient $D^{-1}P_D(z)$; see the projection and squared distance identities in \citep{bauschke2017}. Thus, for
\[
\mathcal L_D(x,u)\coloneqq f(x)
+\frac12\|P_D(u+Dc(x))\|_{D^{-1}}^2
-\frac12\|u\|_{D^{-1}}^2,
\]
the chain rule gives
\[
\begin{aligned}
\nabla_x\mathcal L_D(x,u)
&=\nabla f(x)+J_c(x)^\top P_D(u+Dc(x))=G(x,u),\\
\nabla_u\mathcal L_D(x,u)
&=D^{-1}\bigl(P_D(u+Dc(x))-u\bigr)=D^{-1}d(x,u).
\end{aligned}
\]
These are the identities in \eqref{eq:augmented}.

For $z,w\in\R^m$, adding the two projection variational inequalities gives
\[
\|P_D(z)-P_D(w)\|_{D^{-1}}^2
\le\langle P_D(z)-P_D(w),z-w\rangle_{D^{-1}}.
\]
The Cauchy--Schwarz inequality therefore implies nonexpansiveness:
\[
\|P_D(z)-P_D(w)\|_{D^{-1}}\le\|z-w\|_{D^{-1}}.
\]
In particular, for a constraint perturbation $e\in\R^m$,
\[
\|P_D(u+D(c(x)+e))-P_D(u+Dc(x))\|_{D^{-1}}
\le\|De\|_{D^{-1}}=\|e\|_D.
\]
Equivalence of positive definite norms gives the Euclidean bounds used in Lemma~\ref{lem:growth}.

\section{Direct sampling with increasing batch sizes}
\label{app:raw}
REC keeps the sample counts fixed. This appendix analyzes an alternative in which the augmented signal is formed directly from a constraint estimate and the batch size increases. The same constraint observation enters both updates. This shared error can be controlled through its conditional second moment.

\subsection{Observation model and energy estimate}
Let $\Fcal_k$ be the history before iteration $k$, including $(x_k,u_k)$, and write $\E_k[\cdot]\coloneqq\E[\cdot\mid\Fcal_k]$. The observations are
\[
\hat c_k\coloneqq c(x_k)+\epsilon_k,~~
\hat g_k\coloneqq\nabla f(x_k)+\zeta_k,
\]
where $\epsilon_k\in\R^m$ and $\zeta_k\in\R^n$ are observation errors, $B_k$ is a deterministic positive integer, and $\sigma_c,\sigma_g\ge0$ are constants satisfying
\begin{equation}
\E_k\epsilon_k=\E_k\zeta_k=0,~~
\E_k\|\epsilon_k\|^2\le\sigma_c^2/B_k,~~
\E_k\|\zeta_k\|^2\le\sigma_g^2/B_k.
\label{eq:raw_noise}
\end{equation}
The two errors may be correlated. With an exact Jacobian, consider
\begin{equation}
\begin{aligned}
\hat\lambda_k&\coloneqq P_D(u_k+D\hat c_k),&\hat d_k&\coloneqq\hat\lambda_k-u_k,\\
\hat G_k&\coloneqq\hat g_k+J_c(x_k)^\top\hat\lambda_k,\\
x_{k+1}&\coloneqq x_k-\alpha_k\hat G_k,&u_{k+1}&\coloneqq u_k+\kappa\alpha_k\hat d_k.
\end{aligned}
\label{eq:raw}
\end{equation}
The steps $\alpha_k$ are deterministic and satisfy $0<\kappa\alpha_k\le1$. Assume $u_0\in\Ccal$ almost surely and $\E(\|x_0\|^2+\|u_0\|^2)<\infty$. The multiplier update is a convex combination of $u_k$ and $\hat\lambda_k$, so $u_k\in\Ccal$ for every $k$. Define $p_k\coloneqq\hat\lambda_k-\lambda(x_k,u_k)$. Nonexpansiveness gives
\begin{equation}
\hat d_k-d(x_k,u_k)=p_k,~~
\|p_k\|_{D^{-1}}\le\|\epsilon_k\|_D.
\label{eq:raw_error}
\end{equation}
Fix $(x_*,u_*)\in\Zstar$, write $c_*\coloneqq c(x_*)$, and define
\[
V_{*,k}\coloneqq\frac12\|x_k-x_*\|^2
+\frac1{2\kappa}\|u_k-u_*\|_{D^{-1}}^2.
\]
Although $p_k$ need not be centered, conditional centering of $\epsilon_k$ yields, with $\lambda_{\max}(D)$ denoting the largest eigenvalue of $D$,
\begin{equation}
\begin{aligned}
\E_k\langle\hat\lambda_k-u_*,\epsilon_k\rangle
&=\E_k\langle p_k,\epsilon_k\rangle\\
&\le\E_k\|\epsilon_k\|_D^2
\le\lambda_{\max}(D)\sigma_c^2/B_k.
\end{aligned}
\label{eq:shared}
\end{equation}
Using the scalar remainders $B_f$ and $Q_*$ and the
cone-valued remainder $B_c$ defined in
Lemma~\ref{lem:energy}, set
\[
\widehat{\mathcal H}_k\coloneqq B_f(x_*,x_k)+Q_*(x_k)
+\langle\hat\lambda_k,B_c(x_*,x_k)\rangle
+\tfrac12\|\hat d_k\|_{D^{-1}}^2\ge0.
\]
\begin{theorem}[Finite time bound for direct sampling]
\label{thm:raw_finite}
Under Assumptions~\ref{ass:convex}, \ref{ass:growth}, and \eqref{eq:raw_noise}, for a positive integer $T$, let $A_T\coloneqq\sum_{k=0}^{T-1}\alpha_k$ and $S_T\coloneqq\sum_{k=0}^{T-1}\alpha_k^2$. There are finite constants $C,C_o$, independent of $k,T$, such that
\begin{equation}
\frac{\sum_{k<T}\alpha_k\E\widehat{\mathcal H}_k}{A_T}
\le\frac{e^{CS_T}}{A_T}\left[
\E V_{*,0}+C_{\epsilon}\sum_{k<T}\frac{\alpha_k}{B_k}
+C_o\sum_{k<T}\frac{\alpha_k^2}{B_k}\right],
\label{eq:raw_bound}
\end{equation}
where $C_{\epsilon}\coloneqq\lambda_{\max}(D)\sigma_c^2$.
\end{theorem}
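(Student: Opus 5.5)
We will derive a one-step energy inequality for \eqref{eq:raw} of the form
\[
\E_k V_{*,k+1}\le(1+C\alpha_k^2)V_{*,k}-\alpha_k\E_k\widehat{\mathcal H}_k+C_\epsilon\frac{\alpha_k}{B_k}+C_o\frac{\alpha_k^2}{B_k},
\]
and then sum it with a discrete Gronwall factor. The structure mirrors Lemma~\ref{lem:energy}. The difference is that the normal cone identity is now applied at the sampled signal $\hat\lambda_k$ rather than at $\lambda(x_k,u_k)$.

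\textbf{Expanding the energy.} We expand $V_{*,k+1}$ exactly as a quadratic:
\[
V_{*,k+1}=V_{*,k}-\alpha_k\langle x_k-x_*,\hat G_k\rangle+\alpha_k\langle u_k-u_*,D^{-1}\hat d_k\rangle+\tfrac{\alpha_k^2}{2}\bigl(\|\hat G_k\|^2+\kappa\|\hat d_k\|_{D^{-1}}^2\bigr).
\]

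\textbf{Linear terms.} Applying \eqref{eq:normal} to $\hat\lambda_k=P_D(u_k+D\hat c_k)$ gives $\hat c_k-D^{-1}\hat d_k\in N_{\Ccal}(\hat\lambda_k)$. Together with $c_*\in N_\Ccal(u_*)$, monotonicity yields $\langle\hat\lambda_k-u_*,\hat c_k-c_*-D^{-1}\hat d_k\rangle\ge0$. Writing $u_k-u_*=(\hat\lambda_k-u_*)-\hat d_k$ then gives the analogue of \eqref{eq:supply}:
\[
\langle u_k-u_*,D^{-1}\hat d_k\rangle\le\langle\hat\lambda_k-u_*,c(x_k)-c_*\rangle+\langle\hat\lambda_k-u_*,\epsilon_k\rangle-\|\hat d_k\|_{D^{-1}}^2.
\]
In the primal term we split $\hat G_k=\nabla f(x_k)+J_c(x_k)^\top\hat\lambda_k+\zeta_k$, and the $\zeta_k$ part has zero conditional mean. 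The algebraic expansion in the proof of Lemma~\ref{lem:energy} holds for any $\lambda\in\Ccal$, so taking $\lambda=\hat\lambda_k$ turns the deterministic linear terms into
\[
-B_f(x_*,x_k)-Q_*(x_k)-\langle\hat\lambda_k,B_c\rangle-\|\hat d_k\|^2_{D^{-1}}.
\]
The shared-noise cross term is bounded by \eqref{eq:shared}, which contributes $\alpha_k\lambda_{\max}(D)\sigma_c^2/B_k=C_\epsilon\alpha_k/B_k$. Only half of $\|\hat d_k\|^2_{D^{-1}}$ is kept in $\widehat{\mathcal H}_k$; the other half is reserved for the quadratic terms.

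\textbf{Quadratic terms.} These terms are the main obstacle, since they must be controlled without any bounded iterates. We first condition: $\E_k\|\hat G_k\|^2\le 2\|\nabla f(x_k)+J_c(x_k)^\top\hat\lambda_k\|^2+2\sigma_g^2/B_k$. Lemma~\ref{lem:growth}, specifically \eqref{eq:perturbed_growth} with $e=\epsilon_k$, then bounds the first part by $C(V_{*,k}+\|\epsilon_k\|^2)$, and $\E_k\|\epsilon_k\|^2\le\sigma_c^2/B_k$. Likewise $\kappa\|\hat d_k\|^2_{D^{-1}}\le C(V_{*,k}+\|\epsilon_k\|^2)$. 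Hence the quadratic terms are at most $C\alpha_k^2V_{*,k}+C_o\alpha_k^2/B_k$. (Alternatively, for $\kappa\alpha_k$ small, the $\|\hat d_k\|^2$ part could be absorbed by the reserved half; the growth bound avoids needing that.)

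\textbf{Summation.} We take total expectations, which is justified because finite second moments propagate by induction as in Lemma~\ref{lem:joint}. Divide the inequality by $\Pi_{k+1}=\prod_{j\le k}(1+C\alpha_j^2)$, which satisfies $1\le\Pi_{k+1}\le e^{CS_T}$, and telescope. Since $V_{*,T}\ge0$, we obtain
\[
\sum_{k<T}\alpha_k\E\widehat{\mathcal H}_k\le e^{CS_T}\Bigl[\E V_{*,0}+C_\epsilon\sum_{k<T}\alpha_k/B_k+C_o\sum_{k<T}\alpha_k^2/B_k\Bigr].
\]
Dividing by $A_T$ gives \eqref{eq:raw_bound}.
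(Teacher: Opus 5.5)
Your proposal is correct and follows essentially the same route as the paper. The shared steps are normal cone monotonicity at $\hat\lambda_k$, the remainder identity from Lemma~\ref{lem:energy} with $\lambda=\hat\lambda_k$, the bound \eqref{eq:shared} for the shared constraint error, and telescoping with the product $\prod_{j\le k}(1+C\alpha_j^2)\le e^{CS_T}$.

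There are two minor differences. The paper absorbs the dual quadratic term $\tfrac{\kappa\alpha_k^2}{2}\|\hat d_k\|_{D^{-1}}^2$ into the dissipation using $\kappa\alpha_k\le1$, whereas you bound it by \eqref{eq:perturbed_growth}; both work. Also, your first bound on $\E_k\|\hat G_k\|^2$ should read $2\E_k\|\nabla f(x_k)+J_c(x_k)^\top\hat\lambda_k\|^2$, because $\hat\lambda_k$ is not $\Fcal_k$-measurable; your next sentence already treats it correctly.
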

\begin{proof}
The projection optimality condition gives $c(x_k)+\epsilon_k-D^{-1}\hat d_k\in N_{\Ccal}(\hat\lambda_k)$, while $c_*\in N_{\Ccal}(u_*)$. Apply normal cone monotonicity to $\hat\lambda_k$ and $u_*$:
\[
\langle u_k-u_*,D^{-1}\hat d_k\rangle
\le\langle\hat\lambda_k-u_*,c(x_k)-c_*+\epsilon_k\rangle
-\|\hat d_k\|_{D^{-1}}^2.
\]
Expand the two quadratic terms of $V_*$ under \eqref{eq:raw} and use the convexity remainder identity from Lemma~\ref{lem:energy}. With $B_f=B_f(x_*,x_k)$, $B_c=B_c(x_*,x_k)$, and $Q_*=Q_*(x_k)$, this gives
\[
\begin{aligned}
V_{*,k+1}\le{}&V_{*,k}
-\alpha_k\bigl[B_f+Q_*+\langle\hat\lambda_k,B_c\rangle\bigr]\\
&-\alpha_k(1-\kappa\alpha_k/2)\|\hat d_k\|_{D^{-1}}^2
-\alpha_k\langle x_k-x_*,\zeta_k\rangle\\
&+\alpha_k\langle\hat\lambda_k-u_*,\epsilon_k\rangle
+\tfrac12\alpha_k^2\|\hat G_k\|^2.
\end{aligned}
\]
Since
\[
\hat G_k=G(x_k,u_k)+\zeta_k+J_c(x_k)^\top p_k,
\]
Lemma~\ref{lem:growth}, boundedness of $J_c$, and \eqref{eq:raw_error} give
\[
\E_k\|\hat G_k\|^2\le C V_{*,k}
+C(\sigma_g^2+\sigma_c^2)/B_k.
\]
This estimate does not require independence between $\epsilon_k$ and $\zeta_k$. Take conditional expectations, apply \eqref{eq:shared}, and use $1-\kappa\alpha_k/2\ge1/2$:
\begin{equation}
\E_k V_{*,k+1}\le(1+C\alpha_k^2)V_{*,k}
-\alpha_k\E_k\widehat{\mathcal H}_k
+C_{\epsilon}\frac{\alpha_k}{B_k}+C_o\frac{\alpha_k^2}{B_k}.
\label{eq:raw_drift}
\end{equation}
Set $P_0\coloneqq1$ and $P_{k+1}\coloneqq\prod_{j=0}^k(1+C\alpha_j^2)$. Dividing \eqref{eq:raw_drift} by $P_{k+1}$, taking expectations, and summing gives
\[
\sum_{k=0}^{T-1}\frac{\alpha_k\E\widehat{\mathcal H}_k}{P_{k+1}}
\le\E V_{*,0}
+\sum_{k=0}^{T-1}\frac{C_\epsilon\alpha_k/B_k+C_o\alpha_k^2/B_k}{P_{k+1}}.
\]
Because $1\le P_{k+1}\le e^{CS_T}$ and $\widehat{\mathcal H}_k\ge0$, this implies \eqref{eq:raw_bound}.
\end{proof}
For a fixed horizon with $\alpha_k\coloneqq a_0/\sqrt T$, $0<a_0\le1/\kappa$, and a fixed positive integer $B_k\coloneqq B$, the bound has order $T^{-1/2}+B^{-1}+(B\sqrt T)^{-1}$. The $B^{-1}$ term arises from the second moment of the shared constraint error. The bound controls the sampled energy dissipation measure $\widehat{\mathcal H}_k$. The term $B^{-1}$ is an upper-bound contribution from observation noise, not a lower bound on the attainable error.

\subsection{Almost sure convergence}
\begin{theorem}[Convergence with increasing batches]
\label{thm:raw_convergence}
Under the conditions of Theorem~\ref{thm:raw_finite}, suppose also that
\[
\begin{gathered}
\alpha_k\to0,~~\sum_k\alpha_k=\infty,~~\sum_k\alpha_k^2<\infty,\\
B_k\to\infty,~~\sum_k\alpha_k/B_k<\infty.
\end{gathered}
\]
Then \eqref{eq:raw} is almost surely bounded and converges to one KKT point.
\end{theorem}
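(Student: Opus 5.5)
The plan is to run the same three-stage argument as in the proof of Theorem~\ref{thm:main}, with the conditional drift \eqref{eq:raw_drift} replacing the joint energy estimate. No tracking state is needed here.

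\emph{Step one: boundedness and summable dissipation.} Inequality \eqref{eq:raw_drift} has exactly the Robbins--Siegmund form with $X_k=V_{*,k}$, $a_k=C\alpha_k^2$, $b_k=\alpha_k\E_k\widehat{\mathcal H}_k$ and $c_k=C_\epsilon\alpha_k/B_k+C_o\alpha_k^2/B_k$. These coefficients are deterministic and summable because $\sum_k\alpha_k^2<\infty$ and $\sum_k\alpha_k/B_k<\infty$. Integrability of each $V_{*,k}$ follows by induction from the growth bound on $\E_k\|\hat G_k\|^2$. The theorem therefore gives a finite almost sure limit of $V_{*,k}$, hence almost sure boundedness of $(x_k,u_k)$, together with $\sum_k\alpha_k\E_k\widehat{\mathcal H}_k<\infty$. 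The same argument applies verbatim for every fixed reference point in $\Zstar$, with constants depending on that point.

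\emph{Step two: the perturbed Euler form.} I would write
\[
z_{k+1}=z_k+\alpha_kF(z_k)+r_k+\Delta M_k,
\]
where $r_k$ collects the conditional mean of the signal perturbation and $\Delta M_k$ collects the centered noise. Concretely,
\[
r_k\coloneqq\alpha_k\begin{pmatrix}-J_c(x_k)^\top\E_kp_k\\ \kappa\E_kp_k\end{pmatrix},
\quad
\Delta M_k\coloneqq\alpha_k\begin{pmatrix}-\zeta_k-J_c(x_k)^\top(p_k-\E_kp_k)\\ \kappa(p_k-\E_kp_k)\end{pmatrix}.
\]

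The martingale part $\Delta M_k$ has conditional second moment at most $C\alpha_k^2/B_k$, by \eqref{eq:raw_error}, \eqref{eq:raw_noise} and the bound on $J_c$. Its series is therefore an $L^2$-bounded martingale and converges almost surely.

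The mean part $r_k$ is the main obstacle, because $p_k$ is not centered and $\|\E_kp_k\|$ is only bounded by $C(\E_k\|\epsilon_k\|^2)^{1/2}\le C/\sqrt{B_k}$. The hypothesis $\sum_k\alpha_k/B_k<\infty$ does not by itself make $\sum_k\alpha_k/\sqrt{B_k}$ finite. I would therefore avoid requiring absolute summability. Instead I would place $\E_kp_k$ in the vanishing-field term $b_k$ of Lemma~\ref{lem:limit}: since $B_k\to\infty$, we have $\|\E_kp_k\|\le C/\sqrt{B_k}\to0$. This is exactly the situation Lemma~\ref{lem:limit} permits, with $b_k\to0$ and no summability required. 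With $r_k=0$, the remaining hypotheses of Lemma~\ref{lem:limit} hold: $z_k$ is bounded, $\alpha_k\to0$, and $\sum_k\alpha_k=\infty$.

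\emph{Step three: identification of the limit.} The first assertion of Lemma~\ref{lem:limit}, using convergence of $V_{*,k}$, places every accumulation point in $\Zstar$. I would take a countable dense subset of $\Zstar$ and intersect the probability-one events on which the corresponding distance energies converge. The second assertion of Lemma~\ref{lem:limit} then yields almost sure convergence of $(x_k,u_k)$ to a single random KKT point, which completes the proof of Theorem~\ref{thm:raw_convergence}.
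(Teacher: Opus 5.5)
Your proposal is correct and follows the paper's own proof. It applies Robbins--Siegmund to \eqref{eq:raw_drift} for boundedness and for convergence of every distance energy. It splits $p_k$ into its conditional mean, placed in the vanishing-field term $b_k$ of Lemma~\ref{lem:limit} precisely because $\sum_k\alpha_k/\sqrt{B_k}$ need not converge, and a centered part handled as an $L^2$-bounded martingale. It finishes with the countable-dense-subset argument, and your explicit integrability-by-induction remark is a detail the paper leaves implicit.
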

\begin{proof}
The additive terms in \eqref{eq:raw_drift} are summable because $\sum_k\alpha_k/B_k<\infty$ and $B_k\ge1$. The Robbins--Siegmund theorem \citep{robbins1971} therefore gives a finite almost sure limit of $V_{*,k}$ for each fixed KKT reference point. Its quadratic form implies almost sure boundedness of $(x_k,u_k)$.

Define $\bar p_k\coloneqq\E_kp_k$ and $\widetilde p_k\coloneqq p_k-\bar p_k$. By \eqref{eq:raw_error} and conditional Jensen's inequality,
\[
\|\bar p_k\|\le C B_k^{-1/2}\longrightarrow0,
~~
\E_k\|\widetilde p_k\|^2\le C/B_k.
\]
Writing $z_k\coloneqq(x_k,u_k)$, the iteration becomes
\[
z_{k+1}=z_k+\alpha_kF(z_k)+\Delta M_k+\alpha_k b_k,
\]
where
\[
\Delta M_k\coloneqq\alpha_k
\begin{pmatrix}
-\zeta_k-J_c(x_k)^\top\widetilde p_k\\
\kappa\widetilde p_k
\end{pmatrix},
~~
b_k\coloneqq
\begin{pmatrix}
-J_c(x_k)^\top\bar p_k\\
\kappa\bar p_k
\end{pmatrix}.
\]
The increments satisfy $\E_k\Delta M_k=0$. Boundedness of $J_c$ gives
\[
\E_k\|\Delta M_k\|^2\le C\alpha_k^2/B_k,
~~ \|b_k\|\le C B_k^{-1/2}\longrightarrow0.
\]
Thus the partial sums of $\sum_k\Delta M_k$ form an $L^2$ bounded martingale and converge almost surely \citep{kushner2003}. Correlation between the two observation errors is allowed in this estimate. Lemma~\ref{lem:limit}, with zero summable remainder, shows that every accumulation point belongs to $\Zstar$. Applying the energy convergence argument on a countable dense subset of $\Zstar$ and intersecting the resulting probability one events gives convergence to a single KKT point.
\end{proof}
For example, $\alpha_k\coloneqq\alpha_0/(k+1)$ and
\[
B_k\coloneqq\left\lceil B_0[\log(k+2)]^{1+\eta}\right\rceil,~~ B_0>0,~ \eta>0,
\]
satisfy these conditions when $\alpha_0\le1/\kappa$. Here $\lceil\cdot\rceil$ denotes the ceiling function. A vanishing mean error is sufficient in Lemma~\ref{lem:limit} because its cumulative effect vanishes on each fixed optimization time window.

\section{Experimental details}
\label{app:experiments}
\subsection{Updates and measurements}
Throughout the experiments, $D\coloneqq I$ and $\kappa\coloneqq1$. The matrix $I$ has the same dimension as the multiplier vector. At iteration $k$, $\hat g_k$, $\hat J_k$, and $\hat c_k$ denote observations of $\nabla f(x_k)$, $J_c(x_k)$, and $c(x_k)$, respectively. RAW uses
\[
\begin{aligned}
\hat\lambda_k&\coloneqq P_I(u_k+\hat c_k),\\
x_{k+1}&\coloneqq x_k-\alpha_k(\hat g_k+\hat J_k^\top\hat\lambda_k),\\
u_{k+1}&\coloneqq u_k+\alpha_k(\hat\lambda_k-u_k).
\end{aligned}
\]
PPD uses
\[
x_{k+1}\coloneqq x_k-\alpha_k(\hat g_k+\hat J_k^\top u_k),
~~
u_{k+1}\coloneqq P_I(u_k+\alpha_k\hat c_k).
\]
For REC, define $\lambda_k\coloneqq P_I(u_k+y_k)$ and apply
\[
\begin{aligned}
x_{k+1}&\coloneqq x_k-\alpha_k(\hat g_k+\hat J_k^\top\lambda_k),\\
u_{k+1}&\coloneqq u_k+\alpha_k(\lambda_k-u_k),\\
y_{k+1}&\coloneqq(1-\gamma_k)y_k+\gamma_k\hat c_{k+1},
\end{aligned}
\]
where $\hat c_{k+1}$ is a fresh constraint observation at $x_{k+1}$. CG uses the same update with $\gamma_k\coloneqq0.1$. For batch size $B$, RAW averages $B$ independent observations in each sampled channel before forming its direction.

Measurements use the exact population functions. For a known optimum $x_*$, the objective, feasibility, complementarity, and tracking errors are
\[
\begin{aligned}
E_f(x)&\coloneqq|f(x)-f(x_*)|,
&V_c(x)&\coloneqq\operatorname{dist}(c(x),-\K),\\
C_c(x,u)&\coloneqq|u^\top c(x)|,
&E_y(x,y)&\coloneqq\|y-c(x)\|^2.
\end{aligned}
\]
For every method, the current residual is
\[
\begin{aligned}
\lambda(x,u)&=P_I(u+c(x)),\\
\mathcal R(x,u)&=\|\nabla f(x)+J_c(x)^\top\lambda(x,u)\|^2
+\|\lambda(x,u)-u\|^2.
\end{aligned}
\]
Statistics are calculated from these measurements on each run. Tables report means and sample standard deviations across the stated runs. Figures use no temporal smoothing. The step schedules used below are
\[
\alpha_k=\alpha_0(1+k/\tau_0)^{-(3/4+\vartheta)},
~~
\gamma_k=\gamma_0(1+k/\tau_0)^{-(1/2+\vartheta)},
\]
with parameters specified for each experiment.

\subsection{Scalar bias and equilibrium displacement}
The fixed-state calculation estimates
\[
b(s)=\E[s+\varepsilon]_+-[s]_+
\]
for symmetric two point noise $\varepsilon=\pm1$ and standard Gaussian noise. With seed 48101, generate $N\coloneqq10^5$ independent noise values $\varepsilon_i$ and compute
\[
Z_i(s)\coloneqq\frac{[s+\varepsilon_i]_++[s-\varepsilon_i]_+}{2}-[s]_+,
~~
\hat b_N(s)\coloneqq\frac1N\sum_{i=1}^N Z_i(s).
\]
The same draws are reused across the evaluated arguments $s$. Error bars have half-width $1.96\hat s_Z(s)/\sqrt N$, where $\hat s_Z(s)$ is the sample standard deviation of the $N$ pair averages. For two point noise, every pair average equals the exact expectation. For Gaussian noise, the largest absolute discrepancy from the formula in Corollary~\ref{cor:distributions} is $2.13\times10^{-4}$.

The dynamical experiment uses $f(x)=x^2/2-x$ and $c(x)=x\le0$, with exact derivatives and constraint noise $\pm\tau$. The ten amplitudes are
\[
\tau\in\{0,0.25,0.5,0.75,1,1.25,1.5,2,2.5,3\}.
\]
Each amplitude uses 32 paths and $20000$ updates. Seed 20260921 supplies paired paths for RAW, REC, and PPD. All methods start from $x_0=-1$ and $u_0=0$; REC additionally uses $y_0=-1$. The steps have $\alpha_0=0.25$, $\gamma_0=0.5$, $\vartheta=0.05$, and $\tau_0=1$. Figure~\ref{fig:bias} compares RAW and REC with the mean equilibrium prediction in \eqref{eq:phase_prediction}.

\subsection{Nonunique KKT set and observation bounds}
Generate an orthogonal matrix $Q\in\R^{24\times24}$ by a QR decomposition of a Gaussian matrix using seed 48102. Write $v=Qx$, let $q_j^\top$ be row $j$ of $Q$, and let $Q_8$ contain its first eight rows. Set
\[
P=Q_8^\top Q_8,~~ r=Q^\top\tilde r,~~
A=\begin{pmatrix}q_1^\top\\2q_1^\top\\q_2^\top\end{pmatrix},
\]
where $\tilde r\in\R^{24}$ has first two entries $0.5$ and $1$, entries three through eight equal to $0.2$, and all remaining entries zero. The problem is
\[
f(x)=\frac12\|P(x-r)\|^2,~~ c(x)=Ax\in-\K,
~~ \K=\{0\}^2\times\R_+.
\]
Stationarity and complementarity yield
\[
v_1=v_2=0,~ v_3=\cdots=v_8=0.2,~
u_1+2u_2=0.5,~ u_3=1,
\]
which defines the KKT set in \eqref{eq:nonunique_set}. Orthogonality gives its exact Euclidean distance:
\[
\operatorname{dist}((x,u),\Zstar)^2
=v_1^2+v_2^2+\sum_{j=3}^{8}(v_j-0.2)^2
+\frac{(u_1+2u_2-0.5)^2}{5}+(u_3-1)^2.
\]
The remaining 16 primal coordinates and $(2u_1-u_2)/\sqrt5$ are free.

Let $v_*\in\R^{24}$ have entries three through eight equal to $0.2$ and all other entries zero. In group $g\in\{-2,0,2\}$, initialize
\[
\begin{aligned}
v_0&=v_*+0.5\sum_{j=1}^{8}\mathbf e_j+g\mathbf e_9,
&x_0&=Q^\top v_0,\\
u_0&=(2g/\sqrt5,-g/\sqrt5,0.5)^\top,
&y_0&=Ax_0+(5/\sqrt3)\mathbf1_3.
\end{aligned}
\]
Here $\mathbf e_j$ is coordinate vector $j$ in $\R^{24}$ and $\mathbf1_3$ is the vector of three ones. Every group has initial tracking error of norm five and distance $\sqrt{2.3}\approx1.5166$ to the KKT set.

All observation errors have independent uniform components. The gradient and Jacobian amplitudes are $0.03$ and $0.01$, and the three constraint amplitudes are $0.2$, $0.4$, and $2$. Hence
\[
\begin{aligned}
\E\|\hat g_k-\nabla f(x_k)\|^2&=24(0.03)^2/3=0.0072,\\
\E\|\hat J_k-A\|_F^2&=72(0.01)^2/3=0.0024,\\
\E\|\hat c_{k+1}-Ax_{k+1}\|^2&=(0.2^2+0.4^2+2^2)/3=1.4,
\end{aligned}
\]
where $\|\cdot\|_F$ is the Frobenius norm and bounds the operator norm used in Assumption~\ref{ass:oracle}. The gradient has Lipschitz constant one and the Jacobian is constant. Each fresh constraint sample is independent of the current derivative observations. REC uses eight paths per group, $30000$ updates per path, seed 48103, and step parameters $\alpha_0=0.25$, $\gamma_0=0.5$, $\vartheta=0.05$, and $\tau_0=20$.

To measure movement in the free directions, define
\[
w_k\coloneqq\left(v_{k,9},\ldots,v_{k,24},
\frac{2u_{k,1}-u_{k,2}}{\sqrt5}\right)\in\R^{17},
~~
M_{p,q}\coloneqq\max_{p\le k\le q}\|w_k-w_p\|.
\]
Here $v_{k,j}$ and $u_{k,j}$ denote the corresponding coordinates at iteration $k$, and $p,q$ are integer endpoints. The mean values of $M_{p,q}$ for $[p,q]=[1000,2000]$, $[5000,10000]$, and $[15000,30000]$ are $0.0464$, $0.0310$, and $0.0187$, respectively. Every iterate in each interval enters this measurement.

\subsection{Nonlinear instances and calibration}
\label{app:protocol}
For $n\in\{20,50\}$, generate $A\in\R^{4\times n}$ with Gaussian entries and normalize each row to unit Euclidean norm. Write its rows as $a_j^\top$. The population problem is
\[
f(x)=\frac12\|x-r\|^2,~~
c_j(x)=\psi(a_j^\top x)-\beta_j\le0,~~
\psi(t)=\log(1+e^t).
\]
Set $x_*=0.2\mathbf1_n$, $u_*=(0.2,0.3,0.4,0.5)^\top$, $\beta_j=\psi(a_j^\top x_*)$, and $r=x_*+J_c(x_*)^\top u_*$. Then $c(x_*)=0$ and $\nabla f(x_*)+J_c(x_*)^\top u_*=0$. All generated matrices have row rank four. Since $\psi$ is strictly increasing, the feasible set is $Ax\le Ax_*$. The point
\[
\tilde x\coloneqq x_*-0.2A^\top(AA^\top)^{-1}\mathbf1_4
\]
strictly satisfies each constraint. Strong convexity gives a unique primal solution, and full row rank of $J_c(x_*)$ gives a unique multiplier.

Calibration uses one instance in each dimension, with matrix seeds 48201 and 48202. Testing uses seeds 48301--48306, the first three in dimension 20 and the remaining three in dimension 50. Each test instance uses eight paths at each noise amplitude $\sigma\in\{0.5,2\}$, giving 48 runs per noise level and 96 runs per method. Gradient noise is componentwise uniform on $[-0.1,0.1]$, constraint noise is componentwise uniform on $[-\sigma,\sigma]$, and $J_c$ is exact. All channels and coordinates are independent. Their second moments are
\[
\E\|\hat g-\nabla f(x)\|^2=n(0.1)^2/3,
~~
\E\|\hat c-c(x)\|^2=4\sigma^2/3.
\]
Since $0<\psi'(t)<1$ and $0<\psi''(t)\le1/4$,
\[
\|J_c(x)\|\le\|A\|,
~~
\|J_c(x)-J_c(z)\|\le\frac12\|x-z\|.
\]
Thus the global regularity and observation conditions for REC hold. All initial primal, multiplier, and tracking states are zero. SLPMM and APriD use the primal box $\mathcal X\coloneqq[-10,10]^n$. Every reported iterate and every SLPMM subproblem minimizer lies strictly inside this box, so the box constraint is inactive in the reported runs.

One observation tuple contains one gradient vector and one constraint vector. The constraint vector is evaluated at the new primal point for REC and CG and at the current primal point for the other methods. Every test run receives $20000$ tuples. RAW with batch size four averages four tuples per update and performs $5000$ updates; the other configurations perform $20000$ updates. No additional stochastic observations are used to evaluate the exact Jacobian or the reported measurements.

Calibration tests a scale $s\in\{0.25,1,4\}$ using four paths per calibration instance and noise level, with $5000$ tuples per run. For RAW, CG, REC, and PPD, the optimization steps are
\[
\alpha_k=0.25s(1+k/20)^{-0.8}.
\]
REC uses $\gamma_k=0.5(1+k/20)^{-0.55}$, while CG uses $\gamma_k=0.1$. SLPMM and APriD use the scale parameters specified below. Selection minimizes the mean of $E_f+V_c$ at each method's output over the calibration runs. One scale is then fixed for all test instances and noise levels. The selected scales are
\[
\begin{array}{lc}
\text{Method}&\text{Selected scale }s\\
\hline
\text{RAW }(B=1)&4\\
\text{RAW }(B=4),\ \text{SLPMM},\ \text{APriD}&1\\
\text{CG},\ \text{REC},\ \text{PPD}&0.25
\end{array}
\]
The estimation comparison instead fixes $s=0.25$ for RAW with both batch sizes, CG, and REC. Its common step formula is indexed by update count; batching reduces the number of updates under the fixed observation budget. CG and REC differ only in the estimation gain.

Noise seeds follow
\[
49100+1000h+\ell+500\mathbb I_{\{\sigma=2\}},
\]
where $h$ is the instance index, $\ell$ is the replicate index starting at zero, and $\mathbb I_{\{\sigma=2\}}$ equals one when $\sigma=2$ and zero otherwise. Calibration indices are 1 and 2, and test indices are 101--106. The same seeds are used across methods to supply paired observation errors at their respective query points.

\subsection{SLPMM and APriD implementations}
For SLPMM \citep{zhang2022}, let $T$ be the number of updates, $s$ the calibrated scale, $\sigma_s\coloneqq s/\sqrt T$, and $a_s\coloneqq\sqrt T/s$. With $J_k\coloneqq J_c(x_k)$, solve
\[
\begin{aligned}
\Delta_k\coloneqq\argmin_{\Delta\in\R^n:\,x_k+\Delta\in\mathcal X}
\biggl\{\hat g_k^\top\Delta+\frac{a_s}{2}\|\Delta\|^2
+\frac1{2\sigma_s}\|[u_k+\sigma_s(\hat c_k+J_k\Delta)]_+\|^2\biggr\},\\
x_{k+1}\coloneqq x_k+\Delta_k,~~
u_{k+1}\coloneqq[u_k+\sigma_s(\hat c_k+J_k\Delta_k)]_+.
\end{aligned}
\]
The four-constraint subproblems are solved in their dual variables to tolerance $10^{-12}$. Enumeration of all 16 active sets checks 20 subproblems, with maximum coordinate discrepancy below $1.1\times10^{-17}$. The test runs average 3.53 inner iterations per update. These inner iterations use the same sampled data and require no additional oracle calls. The prescribed primal output is
\[
\bar x_T^{\mathrm{SLPMM}}\coloneqq\frac1T\sum_{k=0}^{T-1}x_k.
\]

For APriD \citep{yan2022}, use $\beta_1=0.9$, $\beta_2=0.99$, and clipping threshold $\theta=10$. Initialize the first moment vector $m_0$, second moment vector $v_0$, and running maximum $\bar v_0$ to zero in $\R^n$. Define
\[
\begin{aligned}
h_k&\coloneqq\hat g_k+J_k^\top u_k,
&\tilde h_k&\coloneqq h_k/\max\{1,\|h_k\|/\theta\},\\
m_{k+1}&\coloneqq\beta_1m_k+(1-\beta_1)h_k,\\
v_{k+1}&\coloneqq\beta_2v_k+(1-\beta_2)\tilde h_k^{\odot2},
&\bar v_{k+1}&\coloneqq\max\{\bar v_k,v_{k+1}\},\\
x_{k+1}&\coloneqq\Pi_{\mathcal X}\left(x_k-\frac{0.1s}{\sqrt T}\,
 m_{k+1}\oslash\sqrt{\bar v_{k+1}}\right),\\
u_{k+1}&\coloneqq[u_k+(s/\sqrt T)\hat c_k]_+.
\end{aligned}
\]
Here $J_k=J_c(x_k)$, $\Pi_{\mathcal X}$ is Euclidean projection onto the box, $\odot2$ denotes componentwise squaring, and $\oslash$, the square root, and the maximum act componentwise. Projection onto a box is the same coordinatewise clipping operation for any positive diagonal metric. The continuous gradient noise makes all entries of the first second moment estimate positive almost surely. Clipping affects only the second moment estimate; the first moment uses $h_k$. With constant optimization steps, the source's dual-step recursion is constant. The prescribed output is
\[
\bar x_T^{\mathrm{APriD}}\coloneqq
\frac{\sum_{k=0}^{T-1}(1-\beta_1^{T-k})x_k}
{\sum_{k=0}^{T-1}(1-\beta_1^{T-k})}.
\]
An independent replay of 20 updates agrees within $3.5\times10^{-18}$.

\subsection{Additional numerical results}
Table~\ref{tab:averages} reports objective and feasibility errors of averaged primal states using the separately calibrated step scales. APriD uses its prescribed weights; every other method uses $T^{-1}\sum_{k=0}^{T-1}x_k$. These averages are additional measurements for RAW, CG, REC, and PPD, whose main comparison uses current primal outputs. Table~\ref{tab:estimation} reports current residuals and tracking errors with the common optimization step formula. Each noise level includes all six test instances and eight paths per instance. The displayed standard deviations describe variation across these 48 runs.

\begin{table}[tbp]
\centering\small
\caption{Averaged primal outputs with separately calibrated step scales. Means $\pm$ sample standard deviations cover 48 runs per noise level. APriD uses its prescribed weights; all other averages are arithmetic.}
\label{tab:averages}
\setlength{\tabcolsep}{4pt}
\begin{tabular}{clrr}
\toprule
$\sigma$ & Method & $10^2 E_f$ & $10^2 V_c$\\
\midrule
0.5 & RAW ($B=1$) & $1.17\pm0.16$ & $0.07\pm0.11$\\
 & RAW ($B=4$) & $0.11\pm0.09$ & $0.32\pm0.16$\\
 & CG & $0.76\pm0.17$ & $1.25\pm0.24$\\
 & REC & $0.68\pm0.18$ & $1.14\pm0.24$\\
 & PPD & $0.49\pm0.20$ & $0.95\pm0.28$\\
 & SLPMM & $0.35\pm0.14$ & $0.62\pm0.18$\\
 & APriD & $0.34\pm0.14$ & $0.60\pm0.18$\\
\midrule
2 & RAW ($B=1$) & $36.99\pm3.17$ & $0.00\pm0.00$\\
 & RAW ($B=4$) & $9.57\pm0.93$ & $0.00\pm0.00$\\
 & CG & $1.04\pm0.72$ & $0.55\pm0.50$\\
 & REC & $0.73\pm0.58$ & $0.84\pm0.66$\\
 & PPD & $0.81\pm0.60$ & $1.34\pm0.80$\\
 & SLPMM & $0.46\pm0.35$ & $0.88\pm0.56$\\
 & APriD & $0.47\pm0.35$ & $0.84\pm0.54$\\
\bottomrule
\end{tabular}
\end{table}

\begin{table}[tbp]
\centering\small
\caption{Constraint estimation with a common optimization step formula indexed by update count. Each noise level has 48 runs and a budget of $20000$ tuples. Batch size four uses $5000$ updates; the other configurations use $20000$. A dash indicates that there is no recursive estimate.}
\label{tab:estimation}
\setlength{\tabcolsep}{4pt}
\begin{tabular}{clrr}
\toprule
$\sigma$ & Estimate & $10^4\mathcal R$ & $10^2\|y-c(x)\|^2$\\
\midrule
0.5 & RAW ($B=1$) & $9.82\pm2.81$ & ---\\
 & RAW ($B=4$) & $1.41\pm0.52$ & ---\\
 & CG & $0.63\pm0.34$ & $1.82\pm1.29$\\
 & REC & $0.59\pm0.33$ & $0.18\pm0.12$\\
\midrule
2 & RAW ($B=1$) & $922.41\pm231.21$ & ---\\
 & RAW ($B=4$) & $143.86\pm35.12$ & ---\\
 & CG & $10.28\pm5.08$ & $30.77\pm22.02$\\
 & REC & $3.87\pm2.74$ & $3.74\pm2.25$\\
\bottomrule
\end{tabular}
\end{table}

\section{A cone constraint example}
\label{app:cone}
This experiment applies REC to a second order cone (SOC), whose projection couples the constraint coordinates. Let $x,r\in\R^{10}$ and define
\[
\begin{gathered}
\K\coloneqq\{(t,v)\in\R\times\R^9:t\ge\|v\|\},~~
J\coloneqq\diag(-1,1,\ldots,1)\in\R^{10\times10},\\
f(x)\coloneqq\tfrac12\|x-r\|^2,~~ c(x)\coloneqq Jx,~~
r\coloneqq(0,2,0,\ldots,0)^\top.
\end{gathered}
\]
The constraint is $c(x)\in-\K$. Its unique KKT pair is $x_*=u_*=(1,1,0,\ldots,0)^\top$: both feasibility and complementarity hold, and $x_*-r+J^\top u_*=0$. The SOC is self-dual, and its projection is
\[
P_I(t,v)=\begin{cases}(t,v),&\|v\|\le t,\\0,&\|v\|\le-t,\\
\dfrac{t+\|v\|}{2}\left(1,\dfrac{v}{\|v\|}\right),&\text{otherwise}.
\end{cases}
\]
Use $D=I_{10}$ and $\kappa=1$. REC uses an exact Jacobian and independent symmetric two point noise of amplitudes $0.1$ and $0.5$ in each gradient and constraint component. Their squared norm expectations are $0.1$ and $2.5$. The steps have $\alpha_0\coloneqq0.5$, $\gamma_0\coloneqq0.5$, $\vartheta\coloneqq0.05$, and $\tau_0\coloneqq1$. For $s\in\{1,10,100\}$, initialize
\[
x_0\coloneqq(-s,0,\ldots,0)^\top,~~
u_0\coloneqq(s,0,\ldots,0)^\top,~~ y_0\coloneqq s\mathbf1.
\]
The initial estimation error has norm $3s$. Each scale uses 32 paired paths, seed 20260922, and $10^5$ updates. The vector $\mathbf1$ has ten entries equal to one. At all three scales, the mean final residual is approximately $5.07\times10^{-5}$ and the mean squared estimation error is $1.26\times10^{-3}$. Their sample standard deviations are $2.71\times10^{-5}$ and $5.31\times10^{-4}$. These finite trajectories illustrate the conic setting covered by Theorem~\ref{thm:main}, with three different initial tracking errors.

\setlength{\bibsep}{3pt}
\bibliographystyle{elsarticle-num}
\bibliography{references}
\end{document}